\theoremstyle{remark}
\newenvironment{eg}
  {\pushQED{\qed}\examplex}
  {\popQED\endexamplex}
\theoremstyle{remark}
\newtheorem{remark}{Remark}
\theoremstyle{theorem}
\newtheorem{proposition}{Proposition}
\newtheorem{theorem}{Theorem}
\newtheorem{lemma}{Lemma}
\newtheorem{definition}{Definition}
\renewcommand{\vert}{\ | \ }
\renewcommand{\u}[1]{\underline{#1}}
\newcommand{\downmapsto}{\rotatebox[origin=c]{-90}{$\mapsto$}\mkern2mu}
\definecolor{TUMBlue}{HTML}{0065BD}
\definecolor{TUMSecondaryBlue}{HTML}{005293}
\definecolor{TUMSecondaryBlue2}{HTML}{003359}
\definecolor{TUMBlack}{HTML}{000000}
\definecolor{TUMWhite}{HTML}{FFFFFF}
\definecolor{TUMDarkGray}{HTML}{333333}
\definecolor{TUMGray}{HTML}{808080}
\definecolor{TUMLightGray}{HTML}{CCCCC6}
\definecolor{TUMAccentGray}{HTML}{DAD7CB}
\definecolor{TUMAccentOrange}{HTML}{E37222}
\definecolor{TUMAccentGreen}{HTML}{A2AD00}
\definecolor{TUMAccentLightBlue}{HTML}{98C6EA}
\definecolor{TUMAccentBlue}{HTML}{64A0C8}
\pgfplotsset{compat=newest}
\pgfplotsset{
  cycle list={TUMBlue\\TUMAccentOrange\\TUMAccentGreen\\TUMSecondaryBlue2\\TUMDarkGray\\},
}
\bfseries\color{TUMBlue},
\newcolumntype{P}[1]{>{\centering\arraybackslash}p{#1}} 
\newcolumntype{M}[1]{>{\centering\arraybackslash}m{#1}} 
\newcolumntype{L}[1]{>{\raggedright\arraybackslash}m{#1}} 
\newcolumntype{R}[1]{>{\raggedleft\arraybackslash}m{#1}} 
\newcommand*{\getUniversityA}{Vietnamese--German University}
\newcommand*{\getFacultyA}{Department of Computer Science}
\newcommand*{\getUniversityB}{Frankfurt University of Applied Sciences}
\newcommand*{\getFacultyB}{Department of Informatics}
\newcommand*{\getTitle}{Combinatorial Statistics on\\\vspace{5mm} Pattern-avoiding Permutations}
\newcommand*{\getAuthor}{Thien Hoang}
\newcommand*{\getDoctype}{Bachelor's Thesis in Computer Science}
\newcommand*{\getSupervisor}{Dr. Huong Tran}
\newcommand*{\getAdvisor}{Prof. Dr. Manuel Clavel}
\newcommand*{\getSubmissionDate}{24.09.2021}
\newcommand*{\getSubmissionLocation}{Binh Duong, Vietnam}
\begin{document}

\selectlanguage{english}

\pagenumbering{alph}
\begin{titlepage}
  \oddsidemargin=\evensidemargin\relax
  \textwidth=\dimexpr\paperwidth-2\evensidemargin-2in\relax
  \hsize=\textwidth\relax

  \centering

  \begin{minipage}[t]{0.4\textwidth}
    \centering
    \includegraphics[height=15mm]{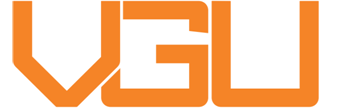}
  \end{minipage}
  \begin{minipage}[t]{0.4\textwidth}
    \centering
    \includegraphics[height=20mm]{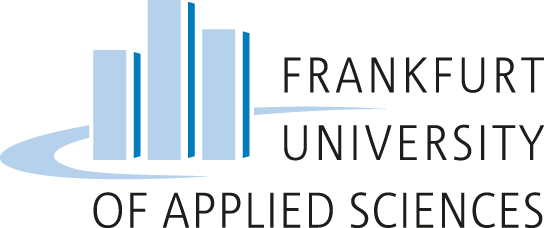}
  \end{minipage}

  \vspace{15mm}
  {\LARGE\MakeUppercase{\getUniversityA{}}}\\

  \vspace{5mm}
  {\large\MakeUppercase{\getFacultyA{}}}\\

  \vspace{7mm}
  \emph{and}
  \vspace{7mm}

  {\LARGE\MakeUppercase{\getUniversityB{}}}\\

  \vspace{5mm}
  {\large\MakeUppercase{\getFacultyB{}}}\\

  \vspace{15mm}
  {\Large \getDoctype{}}

  \vspace{15mm}
  {\huge\bfseries \getTitle{}}

  \vspace{40mm}
  {\LARGE \getAuthor{}}

\end{titlepage}

\frontmatter{}

\begin{titlepage}
  \centering

  \begin{minipage}[t]{0.4\textwidth}
    \centering
    \includegraphics[height=15mm]{logos/vgu.png}
  \end{minipage}
  \begin{minipage}[t]{0.4\textwidth}
    \centering
    \includegraphics[height=20mm]{logos/frauas.png}
  \end{minipage}

  \vspace{15mm}
  {\LARGE\MakeUppercase{\getUniversityA{}}}\\

  \vspace{5mm}
  {\large\MakeUppercase{\getFacultyA{}}}\\

  \vspace{7mm}
  \emph{and}
  \vspace{7mm}

  {\LARGE\MakeUppercase{\getUniversityB{}}}\\

  \vspace{5mm}
  {\large\MakeUppercase{\getFacultyB{}}}\\

  \vspace{15mm}
  {\Large \getDoctype{}}

  \vspace{15mm}
  {\huge\bfseries \getTitle{}}

  \vspace{15mm}
  \begin{tabular}{l l}
    Author:          & \getAuthor{} \\
    Supervisor:      & \getSupervisor{} \\
    Co-supervisor:         & \getAdvisor{} \\
    Submission Date: & \getSubmissionDate{} \\
  \end{tabular}

\end{titlepage}

\cleardoublepage{}

\thispagestyle{empty}
\vspace*{0.8\textheight}
\noindent
\makeatletter
\ifthenelse{\pdf@strcmp{\languagename}{english}=0}
{I confirm that this \MakeLowercase{\getDoctype{}} is my own work and I have documented all sources and material used.}
{Ich versichere, dass ich diese \getDoctype{} selbstständig verfasst und nur die angegebenen Quellen und Hilfsmittel verwendet habe.}
\makeatother

\vspace{15mm}
\noindent
\getSubmissionLocation{}, \getSubmissionDate{} \hspace{50mm} \getAuthor{}

\cleardoublepage{}

\thispagestyle{empty}

\vspace*{5cm}

\begin{center}
\usekomafont{section}
This thesis is dedicated to \emph{myself}.\\
\vspace{12cm}
\emph{``Studying Mathematics for its own sake is equally as valid and equally as worthwhile, even when there is no immediate practical purpose.''}\\
\vspace{5mm}
\raggedleft
------- James Grime
\end{center}

\vspace{10mm}

\cleardoublepage{}

{\addcontentsline{toc}{chapter}{Acknowledgments}}
\thispagestyle{empty}

\vspace*{20mm}

\begin{center}
\usekomafont{section} {\Huge\textsc{Acknowledgments}}

{\large\vspace{15mm}

I would like to thank my supervisor, Dr. Huong Tran, for her incredibly helpful guidance during my working on the thesis.}

\vspace{10mm}

{\large I also want to express my sincere gratitude to my family for their unconditional support; to the friends I made in PiMA and in the university, for inspiring and shaping the person who I am today.}

\end{center}

\vspace{10mm}


\cleardoublepage{}

\chapter{\abstractname}

The study of Mahonian statistics dated back to 1915 when \citet{mahon} showed that the major index and the inverse number have the same distribution on a set of permutations with length $n$. Since then, many Mahonian statistics have been discovered and much effort have been done to find the equidistribution between two Mahonian statistics on permutations avoiding length-3 classical patterns. In recent years, \citet{amini} and \citet{do} have done extensive research with various methods to prove the equidistributions, ranging from using generating functions, Dyck paths, block decompositions, to bijections. In this thesis, we will solve the conjectured equidistribution between bast and foze on Av(312) using the bijection method, as well as refine two established results in \cite{do} with a combinatorial approach.

\microtypesetup{protrusion=false}
\tableofcontents{}
\microtypesetup{protrusion=true}

\mainmatter{}

\newcommand{\inv}{\mathrm{inv}}
\newcommand{\Asc}{\mathrm{Asc}}
\newcommand{\asc}{\mathrm{asc}}
\newcommand{\Atop}{\mathrm{Atop}}
\newcommand{\Abot}{\mathrm{Abot}}
\newcommand{\Des}{\mathrm{Des}}
\newcommand{\des}{\mathrm{des}}
\newcommand{\Dtop}{\mathrm{Dtop}}
\newcommand{\Dbot}{\mathrm{Dbot}}
\newcommand{\Rmax}{\mathrm{Rmax}}
\newcommand{\rmax}{\mathrm{rmax}}
\newcommand{\Rmaxl}{\mathrm{Rmaxl}}
\newcommand{\Lmax}{\mathrm{Lmax}}
\newcommand{\lmax}{\mathrm{lmax}}
\newcommand{\Lmaxl}{\mathrm{Lmaxl}}
\newcommand{\Lmin}{\mathrm{Lmin}}
\newcommand{\Lminl}{\mathrm{Lminl}}
\newcommand{\Rmin}{\mathrm{Rmin}}
\newcommand{\rmin}{\mathrm{rmin}}
\newcommand{\Rminl}{\mathrm{Rminl}}
\newcommand{\maj}{\mathrm{maj}}
\newcommand{\makl}{\mathrm{makl}}
\newcommand{\bast}{\mathrm{bast}}
\newcommand{\foze}{\mathrm{foze}}
\newcommand{\Av}{\mathrm{Av}}
\newcommand{\calA}{\mathscr{A}}
\newcommand{\calB}{\mathscr{B}}


\chapter{Introduction}\label{chapter:introduction}

According to \citet{rowntree}, the term ``statistics'' bears at least four meanings: the discipline of statistics; the methods to collect, process, or interpret data; the data collected; and lastly, the special figures calculated from the data. In this thesis, we will be looking at \textit{combinatorial statistics}, which is closest to the fourth meaning; and the ``data'' we will be dealing with is \textit{permutations}. A distribution of a statistic is a summary of all possible values it can induce on a permutation set and the frequencies of those values. In particular, we are interested in the way some pair of statistics are distributed evenly (i.e. \textit{equidistributed}) on some pair of permutation sets.

\textit{Mahonian statistics} is a class of combinatorial statistics in which they are distributed evenly on $S_n$ (set of all permutations with length $n$). One representative Mahonian statistic is the \textit{major index}, whose Mahonity was shown by \citet{mahon}. However, when we put some restriction such as the permutations must avoid some \textit{pattern} $\sigma$, they may not be equidistributed anymore. The equidistribution problem on pattern-avoiding permutation sets $S_n(\sigma)$ is more worthwhile when $\sigma$ is a classical pattern of length 3, because $|S_n(\sigma)|$ is the $n$-th Catalan number. That means, the results we have on $S_n(\sigma)$ can be extended to other Catalan objects.

In each equidistribution problem, we are provided with two statistics and two pattern-avoiding sets. Over the past decades, researchers have found various Mahonian statistics (see \Cref{tab:mahon} for some examples). Additionally, there are six classical patterns of length 3. A quick multiplication can tell us that there are a huge number of equidistribution problems to be solved. \citet{amini} have done an extensive research on them and presented a lot of conjectured equidistributions. There are many ways to prove an equidistribution. \citet{amini} has used block decomposition, Dyck paths, and generating functions in their proofs. \citet{do} have shown more results by using the bijection method.

In this thesis, we will use the bijection method to prove a new theorem, while reimagine some solutions in \cite{do}. We have successfully proven the equidistribution between two Mahonian statistics bast and foze on the set of 312-avoiding permutations. Furthermore, using a stack-based algorithm, we have refined the bijection that transforms the statistic maj to makl on 231-avoiding permutation set. We also give a refined combinatorial proof for the equidistribution of $\foze''$ and inv on $S_n(312)$ and $S_n(321)$ ($n$ is any positive integer).

The thesis is organized in four chapters:

\begin{itemize}
    \item \textbf{\Cref{chapter:introduction}}, the one you are reading, is meant to give the readers a high-level understanding of what problems we are trying to solve and how we will solve them.
    \item \textbf{\Cref{chapter:background}} introduces some formal definitions and notations which will be used frequently in the thesis.
    \item \textbf{\Cref{chapter:mainres}} presents three equidistribution problems in three sections; two of which (\Cref{sec:majmakl} and \Cref{sec:foze''inv}) are established results with refined solutions; the other one (\Cref{sec:bastfoze}) presents a novel equidistribution we have just found.
    \item \textbf{\Cref{chapter:conclusion}} will summarize all the major and minor results presented in \Cref{chapter:mainres} and a few notes on how the research on this topic can go further.
\end{itemize}

\chapter{Combinatorial statistics and their distributions}\label{chapter:background}

In this chapter, we will go through some definitions and notations that will be used frequently throughout the course of this thesis. We will start with basic concepts such as permutation, reduced form, and patterns (\Cref{sec:pp}); then we introduce some combinatorial statistics and how they are related with vincular patterns (\Cref{sec:stats}); and lastly we address the statistical equidistribution problem on pattern-avoiding sets (\Cref{sec:equistats}).

\section{Permutation patterns: Classical patterns and Vincular patterns}\label{sec:pp}
A permutation $\pi$ of length (or size) $n$ is an arrangement of $n$ pairwise distinct and comparable letters, which are usually $\{1,2,\dots,n\}$ (denoted as $[n]$). We also say that $\pi$ is an $n$-permutation. The $i$-th entry of $\pi$ is denoted as $\pi_i$, and naturally, $i$ is the position or index of $\pi_i$. The first entry is indexed 1 (in oppose to common programming practices which use 0-based index). $\pi$ can also be seen as a function, mapping $i$ to $\pi_i$; and for that reason, sometimes we may denote $\pi(I)=\{\pi_i | i\in I\}$ where $I\subseteq [n]$.

\begin{eg}
Given $\pi=4235167$ and $I=\{1,3,5,7\}$ then $\pi(I)=\{4,3,1,7\}$.
\end{eg}

$\pi$ is said to be in reduced form if its letters are in $[n]$. Reducing a permutation is replacing its $i$-th smallest letter with $i$. This concept is useful when we define an occurrence of a pattern below. In this paper, all permutations are implied to be in reduced form, unless indicated otherwise. The set of $n$-permutation is denoted as $S_n$.

Let $\sigma\in S_m$ and $\pi\in S_n$ (where $m\leq n$). $\pi$ is said to contain the \textit{classical pattern} $\sigma$ if there exist $m$ indices $i_1 < i_2 < \dots < i_m$ such that the reduced form of $\pi_{i_1}\pi_{i_2}\dots\pi_{i_m}$ is $\sigma$. In that case, $(i_1, i_2, \dots, i_m)$ is said to be an occurrence of $\sigma$, or a $\sigma$-occurrence. If $\pi$ contains no $\sigma$-occurrences, it is said that $\pi$ \textit{avoids} $\sigma$ or $\pi$ is a $\sigma$-avoiding permutation. The number of $\sigma$-occurrences in $\pi$ is written as $\sigma(\pi)$.

\begin{eg}\label{eg:cpattern}
$(1,3,5)$ is a 321-occurrence in $\pi=4235167$ because the reduced form of $\pi_1\pi_3\pi_5=431$ is 321. To explain further, $\pi_1 > \pi_3 > \pi_5$ is similar to how $3 > 2 > 1$. The other 321-occurrence is $(1,2,5)$.
\end{eg}

A \textit{vincular pattern} is a specialisation of classical pattern, where we put some adjacency restrictions by underlining some consecutive letters in the permutation.

\begin{eg}
$321$ is a classical pattern, but $\u{32}1$ is a vincular pattern, where the positions matching `3' and `2' must be next to each other. In the last example, $\pi=4235167$ contains two 321-occurrences, but only $(1,2,5)$ is the occurrence of $\u{32}1$.
\end{eg}






\section{Combinatorial statistics}\label{sec:stats}

A \textit{combinatorial statistic}, or simply \textit{statistic}, is a map $\text{st}: S\rightarrow \mathbb{N}$. For example, $i$ is called a \textit{left-to-right maximum} of $\pi$ if $\pi_i$ is larger than any other entry appearing to the left of it. Statistic lmax is defined as the number of the left-to-right maxima of a permutation. Another kind of statistics is \textit{set-induced statistics}, usually denoted by capitalizing the first letter of their notations, e.g. Lmax as set of all left-to-right maxima. For a more comprehensive list of statistics and their definitions used in the thesis, please refer to \cref{tab:statistics}.

When a statistic is evaluated on a set of permutations, we are interested in its \textit{distribution}, i.e. how many times a value is obtained by the statistic. If we describe the distribution of st as a generating function:
$$\sum_{\pi\in S}q^{\text{st}(\pi)}$$
then the coefficient of $q^x$ tells us how many values of $\pi\in S$ there are,  such that st$(\pi)=x$.

\begin{eg}\label{eg:inv}
Let st map each permutation of $S_3$ to its number of 21-occurrences. We have the following table:

\begin{center}
    \begin{tabular}{c|c|c|c|c|c|c}
        $\pi$ & $123$ & $132$ & $213$ & $231$ & $312$ & $321$ \\
        \hline
        $\text{st}(\pi)$ & 0 & 1 & 1 & 2 & 2 & 3 
\end{tabular}
\end{center}

Its distribution can be written as $1 + 2q + 2q^2 + q^3$. In the second row, we see that 0 appears once (corresponding to the term $1q^0$), 1 appears twice (corresp. $2q^1$), 2 appears twice (corresp. $2q^2$), and 3 appears once (corresp. $1q^3$). 
\end{eg}

The statistic in Example \ref{eg:inv} is also known as the inversion number, i.e. number of pairs $(i,j)$ where $i<j$ and $\pi_i>\pi_j$. The inversion number of $\pi$ is denoted as $\inv(i)$. The distribution $\inv$ was given by \citet{rodrigues} as:
$$\sum_{\pi\in S_n}q^{\inv(pi)}=[n]_q!$$
where $[n]_q!=[n]_q[n-1]_q\dots [1]_q$ and $[k]_q= 1 + q + \dots + q^{k-1}$.

Later on, \citet{mahon} showed that the major index, defined as $\maj(\pi)=\sum_{i\in \Des(\pi)}i$, also has the same distribution on $S_n$. Any statistics that have such distribution are called \textit{Mahonian} statistics. Since then, many more Mahonian statistics were discovered (see \Cref{tab:mahon}). \citet{babson} have shown that some Mahonian statistics can be expressed by totalling the number of occurrences of some vincular patterns. For example, in \Cref{tab:mahon}, the major index is equal to the number of $1\u{32}$-occurrences, plus the number of $2\u{31}$-occurrences, plus so on. Following the notation $\sigma(\pi)$ in \Cref{sec:pp}, we have:
$$\inv(\pi)=1\u{32}(\pi) + 2\u{31}(\pi) + 3\u{21}(\pi) + \u{21}(\pi) = (1\u{32}+ 2\u{31}+ 3\u{21}+ \u{21})(\pi)$$


\begin{table}[t]
    \centering
    \begin{tabular}{c|c|c}
    \hline
        \textbf{Name} & \textbf{Sum of vincular patterns} & \textbf{Reference}\\\hline
        inv & \underline{23}1 + \underline{31}2 + \underline{32}1 + \underline{21} & \citet{rodrigues} \\
        maj & 1\underline{32} + 2\underline{31} + 3\underline{21} + \underline{21} & \citet{mahon} \\
        makl & 1\underline{32} + 2\underline{31} + \underline{32}1 + \underline{21} & \citet{clarke} \\
        bast & \underline{13}2 + \underline{21}3 + \underline{32}1 + \underline{21} & \citet{babson} \\
        foze & \underline{21}3 + 3\underline{21} + \underline{13}2 + \underline{21} & \citet{foata} \\
        foze$''$ & \underline{23}1 + \underline{31}2 + \underline{31}2 + \underline{21} & \citet{foata}\\\hline
    \end{tabular}
    \caption{Some Mahonian statistics and their expressions in vincular patterns. The naming was adopted following \citet{amini}.}
    \label{tab:mahon}
\end{table}

\section{Statistic equidistributions}\label{sec:equistats}

While all Mahonian statistics have the same distribution on $S_n$, they may distribute differently on \textit{pattern-avoiding permutation sets}, and it is of great importance two find which pair of statistics are distributed evenly on certain pair of pattern-avoiding sets. Let $\sigma$ be a pattern, we denote that $S_n(\sigma)$ consists of $\sigma$-avoiding $n$-permutations. We also write $\Av(\sigma)=\bigcup_{n\in\mathbb{N}^+}S_n(\sigma)$ as the set of all $\sigma$-avoiding permutations.

Given two statistics, st$_1$ and st$_2$, and two pattern-avoiding sets, $S_n(\sigma_1)$ and $S_n(\sigma_2)$, we say \emph{st$_1$ and st$_2$ are equidistributed on $S_n(\sigma_1)$ and $S_n(\sigma_2)$} if this equality is satisfied:

$$
\sum_{\pi\in S_n(\sigma_1)} q^{\text{st}_1(\pi)} = \sum_{\pi\in S_n(\sigma_2)} q^{\text{st}_2(\pi)}\quad (\forall n\in\mathbb{N}^+)
$$

In this thesis we will consider such equidistributions where st$_1$ and st$_2$ are Mahonian statistics and $\sigma_1$ and $\sigma_2$ are length-3 classical patterns. One motivation to study the distributions on such sets $S_n(\sigma_1)$ and $S_n(\sigma_2)$ is because their cardinality are the $n$-th Catalan number, and the equidistribution can be extended to other Catalan-objects.

Among the many ways of proving equidistribution of st$_1$ and st$_2$ on $S_n(\sigma_1)$ and $S_n(\sigma_2)$, this thesis will put a focus on the bijection method. This means that we will find a bijection $f: S_n(\sigma_1)\rightarrow S_n(\sigma_2)$ such that st$_1(\pi_1)=\text{st}_2(f(\pi_1))$. If such a bijection exists, the equidistribution is naturally implied.


There are three so-called \textit{trivial bijections} on permutations: the reverse, complement, and inverse bijection. Given $\pi\in S_n$:
\begin{itemize}
    \item Reverse: $r(\pi)=\pi_n\pi_{n-1}\dots \pi_1$.
    \item Complement: $c(\pi)=(n-\pi_1+1)(n-\pi_2+1)\dots (n-\pi_n+1)$.
    \item Inverse: The $\pi_i$-th entry of $i(\pi)$ is $i$. We also denote $\pi^{-1}=i(\pi)$.
\end{itemize}

\begin{eg}
$r(4235167) = 7615324,\ c(4235167) = 4653721,\ i(4235167)=5231467$
\end{eg}

It is natural to extend the bijections above to classical patterns too. Let $b$ be one of the bijection above, it is easy to that if $\pi$ contains $x$ occurrences of $\sigma$, where $\sigma$ is a classical pattern, then $b(\pi)$ contains $x$ occurrences occurrences of $b(\sigma)$. As a result, the property also holds true if $b$ is a composition of some trivial bijections.

As for vincular patterns, only reverse and complement are applied (see \citet{bivincular}). Inverse of a vincular pattern is a \textit{bivincular pattern}, which is out of scope of this thesis. When a vincular pattern is reversed, the adjacency restriction is reversed as well.

\begin{eg} $r(2\underline{31}) = \underline{13}2$. Notice that in $2\u{31}$ require the last two positions of the occurrence to be adjacent, but in $\u{13}2$ it is the first two positions. On the other hand,
$c(2\underline{31}) = 2\underline{13}$. Notice that the adjacent positions are unchanged when we apply the complement bijection.
\end{eg}

The trivial bijections on permutations and their extension to patterns is a fundamental tool for us to solve some equidistribution problems, particularly the pair $\bast$ and $\foze$ on $\Av(312)$ and $\Av(312)$ in \Cref{sec:bastfoze} of the next chapter.

\chapter{Main results}\label{chapter:mainres}
There are three equidistribution problems to be discussed in this chapter. In \Cref{sec:majmakl}, we will revisit the equidistribution of maj and makl on Av(231), proven in \citet{do} using bijection method. Next in \Cref{sec:bastfoze}, we will establish a new equidistribution between bast and foze on Av(312), using some results we had in \Cref{sec:majmakl}. The last sections will focus on refining the equidistribution between $\foze''$ and inv on Av(231) and Av(312), but with a combinatorial proof instead of an inductive one in \cite{do}.

\section{Revisit the equidistribution of $\maj$ and $\makl$ on Av(231)}\label{sec:majmakl}
As introduced, here we will revisit the pair maj and makl on Av(231), proven in \citet{do} using bijection method. In order to construct the bijection $\theta$ that transforms $\maj$ to $\makl$ on $\Av(231)$, we need an intermediate bijection $\theta'$ that transforms $\Asc$ to $\Atop$ on $\Av'(231)$ (set of 231-avoiding permutations beginning with the largest element). Then, $\theta$ will be defined in terms of $\theta'$. While working on the thesis, we found that their construction of $\theta'$ was incorrect, so we started writing a new implementation to confirm the correctness of \Cref{thm:ascatop} below.

\begin{theorem}\label{thm:ascatop}\emph{\citet{do}}.
There exists a bijection $\theta': \Av'(231)\rightarrow\Av'(231)$ transforming the statistic $\Asc$ to $\Atop$.
\end{theorem}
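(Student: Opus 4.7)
My plan is to construct $\theta'$ by induction on $n$, using the canonical block decomposition of 231-avoiding permutations at the position of the second-largest letter. Any $\pi \in \Av'_n(231)$ avoids 231 only if every value left of the maximum is smaller than every value right of it; since $\pi_1 = n$, applying this principle again at $n-1$ yields the unique decomposition $\pi = n \cdot \alpha \cdot (n-1) \cdot \beta$, where $j$ is the position of $n-1$, $\alpha$ is a 231-avoiding word on the values $\{1,\ldots,j-2\}$, and $\beta$ is a 231-avoiding word on the values $\{j-1,\ldots,n-2\}$ (either block possibly empty).

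For the inductive step, I would observe that $(j-1)\cdot\alpha$ belongs to $\Av'_{j-1}(231)$ and that, after standardising its values, $(n-1)\cdot\beta$ belongs to $\Av'_{n-j+1}(231)$. The inductive hypothesis furnishes $\theta'$-images of both, which I would then splice together around a fresh leading $n$ and a suitably repositioned $n-1$ to form $\theta'(\pi)$. The splicing rule is chosen so as to (i) match the boundary ascent of $\pi$ at position $j-1$ (present iff $\alpha$ is nonempty) with the ascent top of $\theta'(\pi)$ at value $n-1$; (ii) match ascents of $\pi$ inside the $\alpha$-block with ascent tops of $\theta'(\pi)$ in its low-value sub-block; and (iii) match ascents inside the $\beta$-block with ascent tops in its high-value sub-block.

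Verification would then decompose into three claims. First, $\theta'(\pi) \in \Av'_n(231)$, which is automatic once the splice preserves the ``low-values, then $n-1$, then high-values'' layout. Second, $\Asc(\pi) = \Atop(\theta'(\pi))$ as subsets of $\{2,\ldots,n-1\}$, which by the inductive hypothesis reduces to tracking just the boundary ascent position $j-1$ and the boundary ascent top value $n-1$. Third, bijectivity, which I would handle by constructing $(\theta')^{-1}$ through the symmetric decomposition of $\theta'(\pi)$ at its value-$n{-}1$ position, using ascent tops where we previously used ascents.

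The main obstacle, I expect, will be the bookkeeping in the second claim: $\Asc$ alone is not a complete invariant on $\Av'_n(231)$ (for example $4213$ and $4312$ both satisfy $\Asc = \{3\}$ when $n=4$), so the recursion must canonically resolve such ambiguities, and the splicing rule must be precise enough to convert ascent positions into ascent top values with zero slack. The boundary ascent at position $j-1$ and the empty-$\alpha$ or empty-$\beta$ edge cases are where errors creep in most easily; indeed the paper already observes that a prior construction in the literature failed in exactly this vicinity, so a substantial part of the proof amounts to verifying that the new splicing rule navigates these edge cases correctly.
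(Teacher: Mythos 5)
Your recursive strategy is genuinely different from the paper's, but as written it has a real gap precisely where you yourself locate ``the main obstacle'': the splicing rule is never defined, only constrained by desiderata (i)--(iii), and that rule is where the entire content of the theorem lives. Worse, the desiderata as stated do not appear compatible with what the theorem actually asserts. ``Transforming $\Asc$ to $\Atop$'' here means the set equality $\Asc(\pi)=\Atop(\theta'(\pi))$ (cf.\ the worked example: $\Asc(7642135)=\{5,6\}=\Atop(7216435)$), so if $j-1\in\Asc(\pi)$ then the \emph{number} $j-1$ must occur as an ascent-top value of $\theta'(\pi)$; your rule (i), which pairs the boundary ascent at position $j-1$ with the ascent top at value $n-1$, instead contributes $n-1$ to $\Atop(\theta'(\pi))$. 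Similarly for (iii): the ascents of $\pi$ coming from the $\beta$-block are the ascents of the standardised $(n-1)\cdot\beta$ shifted by $j-1$ (since that block starts at position $j$), whereas the ascent tops supplied by a high-value sub-block on $\{j-1,\dots,n-2\}$ are the standardised ascent tops shifted by $j-2$; these shifts differ by one, so a naive splice cannot realise the required set equality. Finally, you correctly note that $\Asc$ is not injective on $\Av'(231)$, but you never say which auxiliary data your recursion preserves to make $\theta'$ well defined and invertible, so the bijectivity claim also cannot be checked.

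For contrast, the paper resolves both difficulties by factoring $\theta'$ through an explicit intermediate object: a consistent pair of sequences recording the ascent set \emph{together with} the lengths of the inverse descent runs. Proposition \ref{prop2} shows such a pair determines a unique permutation via its $\Asc$, and Proposition \ref{prop3} (proved constructively by the two-stack procedure of Algorithm \ref{alg}) shows the same pair determines a unique permutation via its $\Atop$; the run-length sequence is exactly the invariant separating permutations such as $4213$ and $4312$, and the consistency conditions ($m_i>c_i+\dots+c_k$) encode which sets can arise, eliminating any position-versus-value slack. To salvage your route you would need to exhibit the analogous invariant for your decomposition and write the splice as an explicit formula whose output provably has $j-1$ (not $n-1$) as an ascent top whenever $j-1$ is an ascent of $\pi$; until then the proposal is a plan rather than a proof.
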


First, we will go through a couple of definitions and notations, some of which will be reused in \Cref{sec:bastfoze}, then we will present an algorithm and prove that it is a valid implementation of $\theta'$ mentioned above. By the end of this section we will briefly describe how $\theta: \Av(231)\rightarrow\Av(231)$ is defined in terms of $\theta': \Av'(231)\rightarrow\Av'(231)$.

\subsection{Definitions and notations}

\newcommand{\idr}{inverse descent run\xspace}
\newcommand{\idrs}{inverse descent runs\xspace}
Given an $n$-permutation $\pi$, a \textit{descent run} is a maximal set $\{i, i+1, \dots, j\}\subset [n]$ such that $\pi_i > \pi_{i+1} > \dots > \pi_j$, in the sense that adding $i-1 > 0$ or $j+1 \leq n$ to the set does not make it satisfy the decreasing condition. An \idr is a set $\{i_1, i_2, \dots, i_j\}$ such that $\{\pi_{i_1}, \pi_{i_2}, \dots, \pi_{i_j}\}$ is a descent run in $\pi^{-1}$. If $i_1<i_2<\dots<i_j$ then $\pi_{i_1}, \pi_{i_2}, \dots, \pi_{i_j}$ is an arithmetic sequence with common difference of -1.

Suppose that $\pi$ has $k$ \idrs $I_1, I_2, \dots, I_k$, ordered by the maximum element of each set, i.e. $\max(I_1) > \max(I_2) > \dots > \max(I_k)$. Within the scope of this thesis, all \idrs of any permutation are implied to be ordered in this way. It is also easy to see that all $I_1, I_2, \dots, I_k$ is a partition of $[n]$.

\begin{eg}
Given $\pi=7651324$, its \idrs are $I_1=\{1,2,3,7\},\ I_2=\{5,6\}$, and $I_3=\{4\}$.
\end{eg}

The concept \idr has several properties, particularly on 231-avoiding permutations, as we will see in \Cref{lem:i1isRmax} and \Cref{lem:nmrAscentPlus1IsNmrIdr}. Note that \Cref{lem:i1isRmax} has been briefly mentioned in \cite{do}, but here we will look at a formal proof of it.

\begin{lemma}\label{lem:i1isRmax}
\emph{\citet{do}}. Given $\pi\in\Av(231)$, then the first inverse descent run $I_1$ is $\Rmax(\pi)$.
\end{lemma}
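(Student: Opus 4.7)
The plan is to pin down $I_1$ explicitly as the set of positions in $\pi$ that carry the values $\{a, a+1, \dots, n\}$, where $a := \pi_n$, and then to check that these same positions are exactly the right-to-left maxima of $\pi$. Everything reduces to identifying the descent run of $\pi^{-1}$ that encodes $I_1$, one application of $231$-avoidance, and a short set-level comparison.

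I would first locate the descent run of $\pi^{-1}$ corresponding to $I_1$. Because $I_1$ is the \idr whose maximum element is largest, and position $n$ of $\pi$ is the absolute maximum, we have $n \in I_1$. Unpacking the definition, this means $I_1$ corresponds to the descent run of $\pi^{-1}$ containing the value $n$. Writing this run as $\{a, a+1, \dots, b\}$ and observing that within a descent run the largest value sits at the leftmost index, we conclude $\pi^{-1}_a = n$, i.e.\ $a = \pi_n$.

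The crux is then to show $b = n$, i.e.\ that the descent $\pi^{-1}_a > \pi^{-1}_{a+1} > \cdots$ persists through every intermediate value up to $n$. For $v = a$ we have $\pi^{-1}_a = n > \pi^{-1}_{a+1}$ trivially. For any $v > a$, if $\pi^{-1}_v < \pi^{-1}_{v+1}$ held, then the triple of $\pi$-indices
\[\pi^{-1}_v \;<\; \pi^{-1}_{v+1} \;<\; n\]
would carry the values $v,\ v+1,\ a$; since $a < v < v+1$, this is an occurrence of the classical pattern $231$, contradicting $\pi \in \Av(231)$. So $I_1 = \{\pi^{-1}_a, \pi^{-1}_{a+1}, \dots, \pi^{-1}_n\}$, i.e.\ exactly the positions in $\pi$ of the values $a, a+1, \dots, n$.

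Given this description, $I_1 = \Rmax(\pi)$ is a short check. For $\Rmax(\pi) \subseteq I_1$, any right-to-left maximum $r$ satisfies $\pi_r \geq \pi_n = a$, so $\pi_r \in \{a, \dots, n\}$ and $r \in I_1$. Conversely, for $i \in I_1$ with $\pi_i = v$ and any $m > i$, if $\pi_m < a$ then $\pi_m < v$ automatically; otherwise $\pi_m \in \{a, \dots, n\}$, and the decreasing arrangement just established turns $\pi_m > v$ into $m < i$, contradicting $m > i$. Hence $\pi_m < \pi_i$ for every $m > i$, placing $i$ in $\Rmax(\pi)$. The only non-routine step is the $231$-avoidance one; the essential observation is that $\pi_n = a$ is always available as the small value at the far right, so the triple $(\pi^{-1}_v, \pi^{-1}_{v+1}, n)$ reliably realises a $231$ pattern whenever the descent in $\pi^{-1}$ would otherwise fail.
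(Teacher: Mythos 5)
Your argument is correct and follows essentially the same route as the paper: both proofs first use a $231$-avoidance argument on consecutive values (with $\pi_n$ as the small rightmost witness) to show that $\pi(I_1)$ is exactly the value interval $\{\pi_n,\pi_n+1,\dots,n\}$ arranged in decreasing positional order, and then identify these positions with $\Rmax(\pi)$. Your second half is slightly leaner --- you read off both inclusions directly from the established structure of $I_1$, whereas the paper re-derives the right-to-left maxima by a separate induction that invokes $231$-avoidance a second time --- but this is a streamlining rather than a different method.
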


\begin{proof}
Without knowing that $\pi\in\Av(231)$, we can still see that $\pi(I_1)=\{t, t-1, \dots, \pi_n\}$ ($n$ is the length of $\pi$). We first prove that $t=n$, then we prove $\Rmaxl(\pi)=\{n, n-1, \dots, \pi_n\}$.

Suppose that $t<n$, i.e. $t+1\notin\pi(I_1)$, then $t+1$ appears either before or after $t$. If it is before $t$, the inverse descent run can be extended and thus not maximal. If it is after $t$, then $t(t+1)\pi_n$ is an occurrence of 231. We reach a contradiction in both cases, hence $t+1\in\pi(I_1)$, which is another contradiction to the assumption $t+1\notin\pi(I_1)$. Therefore, $t=n$, or in other words, $\pi(I_1)=\{n,n-1,\dots,\pi_n\}$.

We already know that $\pi_n\in\Rmaxl(\pi)$. If $\pi_n=n$ then indeed $\Rmaxl(\pi)=\{n\}$. Otherwise, suppose that $\pi_n+x\in\Rmaxl(\pi)$ and $\pi_n+x+1\leq n$. When $x=0$, $\pi_n+x$ appears after $\pi_n+x+1$ in the permutation. The property also holds when $x>0$, otherwise $(\pi_n+x)(\pi_n+x+1)\pi_n$ is an occurrence of 231. Furthermore, the letters that appear between $\pi_n+x+1$ and $\pi_n+x$ in $\pi$ are all less than $\pi_n+x$, otherwise it would cause an occurrence of 231. Therefore, $\pi_n+x+1\in\Rmaxl(\pi)$. By induction, $\Rmaxl(\pi)=\{\pi_n, \pi_n+1, \dots, n\}$.
\end{proof}


\begin{lemma}\label{lem:nmrAscentPlus1IsNmrIdr}
Given $\pi\in\Av(231)$, $i$ is an ascent if and only if there exists $j > 1$ such that $i=\max(I_j)$.
\end{lemma}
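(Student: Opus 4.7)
The plan is to prove both directions by combining 231-avoidance with two structural facts about inverse descent runs: (i) if the positions of a run $I_j$ are listed in increasing order $p_1 < p_2 < \dots < p_m$, then $\pi_{p_1} > \pi_{p_2} > \dots > \pi_{p_m}$ with common difference $-1$, and (ii) the value sets $\pi(I_j)$ form a partition of $[n]$ into intervals of consecutive integers. By \Cref{lem:i1isRmax} together with the observation that position $n$ is always a right-to-left maximum, $n \in I_1$; hence $\max(I_1) = n$, so any $i < n$ of the form $\max(I_j)$ automatically has $j > 1$.

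For the $(\Rightarrow)$ direction I would assume $i$ is an ascent, let $I_j$ be its run, and show $i = \max(I_j)$ by contradiction. If not, pick $p \in I_j$ to be the next position of $I_j$ after $i$; by (i), $\pi_p = \pi_i - 1$. The case $p = i + 1$ gives $\pi_{i+1} = \pi_i - 1 < \pi_i$, contradicting the ascent. The case $p > i + 1$ exhibits a triple $(i, i+1, p)$ whose values $(\pi_i, \pi_{i+1}, \pi_i - 1)$ have relative sizes $(2, 3, 1)$ --- a 231-occurrence, contradicting $\pi \in \Av(231)$. Hence $i = \max(I_j)$, and $i < n$ forces $j > 1$.

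For the $(\Leftarrow)$ direction I would take $i = \max(I_j)$ with $j > 1$, set $v := \pi_i$ (which is $\min \pi(I_j)$ by (i)), and assume for contradiction $\pi_{i+1} = w < v$. The crux is to locate the value $v - 1$ in $\pi$: since $v \geq 2$ and $v - 1 < \min \pi(I_j)$, the value $v - 1$ lies in some other block $\pi(I_{j^*})$; and by (ii) it must be the maximum of that block, for otherwise $v$ itself would also lie in $\pi(I_{j^*})$, contradicting disjointness. Consequently, the position of $v - 1$ in $\pi$ equals $\min(I_{j^*})$; call it $s$. Because $v - 1$ and $v$ lie in different descent runs of $\pi^{-1}$ but at consecutive positions of $\pi^{-1}$, the transition between them is not a descent, which gives $\pi^{-1}_{v-1} < \pi^{-1}_v = i$, i.e., $s < i$. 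Ruling out $w = v - 1$ (which would force $s = i+1$, contradicting $s < i$), one gets $w < v - 1 < v$; then the triple of positions $(s, i, i + 1)$ carries values $(v - 1, v, w)$ of relative sizes $(2, 3, 1)$ --- a 231-occurrence, a contradiction.

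The main obstacle I expect is the $(\Leftarrow)$ direction: one has to find the right 231-configuration to rule out $\pi_{i+1} < \pi_i$. The most tempting triples (using another position of $I_j$, or the position of $w + 1$ inside $w$'s own block) only produce patterns $321$ or $312$, which are allowed in $\Av(231)$. The decisive idea is to pivot on the value $v - 1$, whose location in $\pi$ is rigidly pinned down by the block structure of inverse descent runs --- or equivalently by reading off $\pi^{-1}$: this forces $v - 1$ strictly before position $i$ and manufactures the desired $(2, 3, 1)$-pattern together with $w$ sitting at position $i + 1$.
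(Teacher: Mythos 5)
Your proof is correct, and for the forward direction it is essentially the paper's own argument: take the run $I_j$ containing the ascent $i$, and if $i\neq\max(I_j)$ use the next position $p$ of $I_j$ (which carries the value $\pi_i-1$) to produce the 231-occurrence $(i,i+1,p)$. You are in fact more careful than the paper here, since you separate the case $p=i+1$ (where the contradiction is with the ascent itself, not with 231-avoidance) and you justify $j>1$ via $\max(I_1)=n$, neither of which the paper's proof addresses. The substantive difference is that the paper proves \emph{only} this direction and silently omits the converse (that $i=\max(I_j)$ with $j>1$ forces $\pi_i<\pi_{i+1}$), even though the converse is what is actually used later, e.g.\ in \Cref{trueAtopAndIdrs} (equating the number of ascent tops with $k-1$) and in the summation over all $1<j\leq k$ in \Cref{lem:form2(13)}. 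Your argument for that direction --- pivoting on the value $v-1=\pi_i-1$, using the fact that the value blocks $\pi(I_{j^*})$ are consecutive intervals to place $v-1$ at position $\min(I_{j^*})=s$, invoking maximality of the descent run of $\pi^{-1}$ to get $s<i$, and then reading off the 231-occurrence $(s,i,i+1)$ from the assumption $\pi_{i+1}<\pi_i$ --- is sound and fills a genuine gap in the paper's proof.
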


\begin{proof}
Suppose that $i < n$ is an ascent, we can always find $j \leq 1$ such that $i \in I_j$. Suppose that $i\neq \max(I_j)$, then for some $i' > i$ and $i'\in I_j$, we have $\pi_i > \pi_{i'}$, thus $(i, i+1, i')$ is a 231-occurrence, contradict to the original assumption $\pi\in\Av(231)$.
\end{proof}

The following properties were also mentioned in \cite{do}, and the proof is quite straightforward, so we only present them here as a remark.

\begin{remark}
\emph{\citet{do}}. Given $\pi\in\Av(231)$:

\begin{itemize}
    \item if we remove $\pi_i$ where $i\in I_1$, we obtain a new permutation $\pi'\in\Av(231)$ having $k-1$ \idrs $I_1', I_2', \dots, I_{k-1}'$ and $\pi(I_2)=\pi'(I_1')$.
    \item if $u < v$ then $\pi(I_u)>\pi(I_v)$.
    \item $\{\max(I_1), \max(I_2),\dots, \max(I_k)\}=\Rmin(\pi)$.
\end{itemize}

\end{remark}

Given two set of integers $A$ and $B$. We have a couple of definitions:

\begin{definition}
$B$ is said to be nested in $A$ if there exists $x, y\in A$, $x < z$, such that $\forall y\in A$ we have $y < x$ or $y > z$ and $\forall y\in B$ we have $x < y < z$.
\end{definition}

\begin{definition}
$A$ and $B$ are said to be disjoint if $\max(A) < \min(B)$ or $\max(B) < \min(A)$.
\end{definition}

\begin{eg}
Given $A=\{1,2,8\}$,  $B=\{4,6,7\}$, $C=\{3,5\}$, $D=\{9\}$, then $B$ and $C$ are both nested in $A$; $A$ and $D$ are disjoint; $B$ and $C$ are neither nested in nor disjoint from each other.
\end{eg}

The following proposition from \cite{do} is also a fundamental property of \idr, which we will need to show if a permutation is 231-avoiding or not (see \Cref{lem:form2(13)}).

\begin{proposition}\label{nestedOrDisjoint}
\emph{\citet{do}}. Let $I_u$ and $I_v$ be two \idrs of $\pi$, then $\pi\in\Av(231)$ if and only if for any pair $u<v$ we have $I_u$ and $Iv$ are either nested or disjoint.
\end{proposition}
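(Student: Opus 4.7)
The plan is to prove the two directions of the equivalence separately, with the forward direction being a clean case analysis and the reverse requiring more bookkeeping.

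For the forward direction, assume $\pi\in\Av(231)$ and fix $u<v$. The preceding remark gives $\pi(I_u)>\pi(I_v)$ pointwise, which I use freely. Suppose toward contradiction that $I_u$ and $I_v$ are neither nested nor disjoint. Writing $I_u=\{a_1<a_2<\dots<a_p\}$ in position order, I first argue that this assumption forces some $a\in I_u$ to have elements of $I_v$ on both sides of it in position. Indeed, if no such $a$ existed, then for every $a_i$ the set $I_v$ would lie entirely to one side of $a_i$; sweeping $i$ from $1$ to $p$ and using monotonicity, this would place $I_v$ either outside the position interval $[a_1,a_p]$ (giving disjointness) or strictly inside a single gap $(a_i,a_{i+1})$ (giving nestedness), contradicting the assumption. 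So one obtains $b<a<b'$ with $a\in I_u$ and $b,b'\in I_v$. The arithmetic-sequence property of an idr at positions $b<b'$ gives $\pi_b>\pi_{b'}$, while $\pi(I_u)>\pi(I_v)$ gives $\pi_a>\pi_b$. Hence the triple $(b,a,b')$ is a 231-occurrence, contradicting $\pi\in\Av(231)$.

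For the reverse direction I argue the contrapositive: assume $\pi$ contains a 231-occurrence at positions $p_1<p_2<p_3$ with $\pi_{p_2}>\pi_{p_1}>\pi_{p_3}$, and exhibit a pair of idrs that is neither nested nor disjoint. Using the arithmetic-sequence property a second time, one first checks that $p_1$ and $p_2$ cannot share an idr (inside an idr, a smaller position carries a strictly larger value, yet $p_1<p_2$ and $\pi_{p_1}<\pi_{p_2}$), and similarly $p_2$ and $p_3$ cannot share one. Let $I_{u_2}$ be the idr of $p_2$ and $I_{u_1},I_{u_3}$ those of $p_1,p_3$. The candidate bad pair is the one involving $I_{u_2}$ together with whichever of $I_{u_1},I_{u_3}$ has the larger maximum position. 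When $u_1=u_3$, the element $p_2\in I_{u_2}$ is literally sandwiched between $p_1,p_3\in I_{u_1}$, so this pair cannot be nested (a $p_2$-element of the other idr is strictly inside the interval of $I_{u_1}$) and their ranges overlap so they are not disjoint. When $u_1\neq u_3$, one uses the max-position convention $\max(I_1)>\max(I_2)>\dots$ together with the value inequalities of the 231 to force either an extra element of $I_{u_1}$ past $p_2$ or an extra element of $I_{u_3}$ before $p_2$, reducing back to the sandwich configuration.

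The main obstacle is the reverse direction in the sub-case where $p_1,p_2,p_3$ lie in three distinct idrs: the position interleaving needed to defeat nestedness is not immediate, and one has to trace carefully through the $\max$-ordering convention on idrs together with the $\pi(I_u)>\pi(I_v)$ ordering forced by the remark, so as to locate a specific idr-element whose position lies strictly between two others of the companion idr. This case analysis is the bulk of the work, while the forward direction and the single-idr sub-case of the reverse direction are essentially direct consequences of the arithmetic-sequence structure.
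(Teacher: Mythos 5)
First, a point of reference: the thesis states this proposition without proof, importing it from \citet{do}, so there is no in-paper argument to compare yours against; it has to stand on its own. Your forward direction does stand: the sweep correctly shows that if $I_u$ and $I_v$ are not disjoint and $I_v$ does not sit inside a single gap of $I_u$, then some $a\in I_u$ is sandwiched as $b<a<b'$ with $b,b'\in I_v$; the arithmetic-sequence property gives $\pi_b>\pi_{b'}$, the remark $\pi(I_u)>\pi(I_v)$ is legitimately available because $\pi\in\Av(231)$ is assumed in this direction, and $(b,a,b')$ is then a $231$-occurrence.

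The reverse direction has a genuine gap that is not just bookkeeping. A single sandwich $p_1<p_2<p_3$ with $p_1,p_3\in I_{u_1}$ and $p_2\in I_{u_2}$ rules out disjointness and rules out $I_{u_1}$ being nested in $I_{u_2}$, but it does \emph{not} rule out $I_{u_2}$ being nested in $I_{u_1}$: lying strictly inside the position interval of $I_{u_1}$ is exactly what nesting allows, provided $I_{u_2}$ fits into a single gap. So your parenthetical ``a $p_2$-element of the other idr is strictly inside the interval of $I_{u_1}$'' establishes the wrong thing, and the smallest example already defeats this case: for $\pi=231$ the occurrence is $p_1=1$, $p_2=2$, $p_3=3$ with $u_1=u_3$, the two inverse descent runs are $\{1,3\}$ and $\{2\}$, and $\{2\}$ \emph{is} nested in $\{1,3\}$. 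To conclude ``neither nested nor disjoint'' you need a sandwich in \emph{both} directions, i.e.\ four positions alternating between the two runs --- which is precisely the configuration the paper later extracts before citing this proposition in the proof of \Cref{lem:form2(13)}. (The $\pi=231$ example also shows that the statement itself must be read with a specific ordering of the runs and a specific direction of nesting before its ``if'' half can be true at all, so the reverse direction cannot be repaired without first pinning that down.) Three further problems: the three-distinct-run case, which you yourself call the bulk of the work, is only a plan, not a proof; you invoke the remark $\pi(I_u)>\pi(I_v)$ inside the contrapositive, where $\pi\notin\Av(231)$ and that remark is not available; and the claim that $p_2,p_3$ ``similarly'' cannot share a run is not symmetric to the $p_1,p_2$ case, since $p_2<p_3$ with $\pi_{p_2}>\pi_{p_3}$ is compatible with a shared run --- it is still true, but only because $\pi(I)$ is an interval of values, which would force $p_1$ into that same run and reduce to the first case.
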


\subsection{Bijection $\theta': \Av'(231)\rightarrow\Av'(231)$ that transforms $\Asc$ to $\Atop$}\label{bijection theta'}

The bijection $\theta': \Av'(231)\rightarrow\Av'(231)$ presented in \citet{do} is not a direct transformation from $\Av'(231)$ to itself, but rather, it is a composition of two bijections $\theta'_1$ and $\theta'_2$, as shown below:

\begin{center}
    a permutation in $\Av'(231)$\\
    $\downmapsto$ $\theta'_1$\\
    a pair of \textit{consistent sequences}\\
    $\downmapsto$ $\theta'_2$\\
    a permutation in $\Av'(231)$.
\end{center}
where \textit{consistent sequences} is given by \Cref{def:consistent seq}. The bijection $\theta'_1$ is backed by \Cref{prop2}, which has been perfectly proven in \cite{do}. At the same time, $\theta'_2$ is backed by \Cref{prop3}; however, their constructive proof in \cite{do} was incorrect. Thus in this section we are going to revisit \Cref{prop3} and find a correct implementation of $\theta'_2$, thereby confirming the validity of $\theta'$ (\Cref{thm:ascatop}).

Let us take a look at the mentioned \Cref{def:consistent seq}, \Cref{prop2}, and \Cref{prop3}:

\begin{definition}\label{def:consistent seq}
\emph{\citet{do}}. Two sequences of positive integers $(c_1, c_2, \dots, c_k)$ and $(m_1, m_2, \dots, m_k)$ are said to be \emph{consistent} if these conditions hold:
\begin{itemize}
    \item $c_1 \leq 2 $
    \item $c_1+c_2+\dots+c_k=n$
    \item $n=m_1 > m_2 > \dots > m_k$
    \item $m_i > c_i + c_{i+1} + \dots + c_k$ (for $1<i\leq k$)
\end{itemize}
\end{definition}

\begin{proposition}\label{prop2}
\emph{\citet{do}}. Given a consistent pair of sequences $(c_1, c_2, \dots, c_k)$ and $(m_1, m_2, \dots, m_k)$, there is a unique permutation $\pi\in\Av'(231)$ such that:
\begin{itemize}
    \item $|I_i|=c_i$, where $I_i$ is the $i$-th \idr of $\pi$.
    \item $\Asc(\pi)=\{m_2,m_3,\dots,m_k\}$.
\end{itemize}
\end{proposition}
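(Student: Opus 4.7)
The plan is induction on $k$. The base case $k = 1$ is immediate: consistency forces $c_1 = n$, so $\Asc(\pi) = \emptyset$, and the unique element of $\Av'(231)$ with a single inverse descent run is the strictly decreasing permutation $n(n-1)\cdots 1$.

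For the inductive step with $k \geq 2$, the crux is to show that $I_k$ is rigidly pinned down by the consistent data. By the ``sorted blocks'' item of the preceding remark, $\pi(I_k)$ must equal $\{1, 2, \ldots, c_k\}$; since $\max(I_k) = m_k$ and values along any inverse descent run decrease with position, it only remains to argue that these positions are consecutive. I would do this by contradiction: if some outside position $p$ with $\pi_p > c_k$ sat strictly between two positions of $I_k$ carrying values $j > j' \geq 1$, then the triple at those three positions would read $(j, \pi_p, j')$ in the relative order \emph{middle, largest, smallest}, i.e.\ a 231-occurrence, contradicting $\pi \in \Av(231)$. Hence $I_k = \{m_k - c_k + 1, \ldots, m_k\}$. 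Deleting this block produces a permutation $\pi''$ on $[n - c_k]$; a short computation confirms that $(c_1, \ldots, c_{k-1})$ and $(m_1 - c_k, \ldots, m_{k-1} - c_k)$ again form a consistent pair, and the inductive hypothesis supplies a unique $\pi'' \in \Av'(231)$ realising these reduced data. Uniqueness of $\pi$ follows from the rigidity of $I_k$ together with uniqueness of $\pi''$; existence follows by reconstructing $\pi$ from $\pi''$, shifting all values and the positions $\geq m_k - c_k + 1$ of $\pi''$ up by $c_k$, and inserting the decreasing block $c_k, c_k - 1, \ldots, 1$ at the new positions $m_k - c_k + 1, \ldots, m_k$.

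The main obstacle is verifying that the reconstructed $\pi$ really avoids 231 and has inverse descent runs of the prescribed shape. I would invoke \Cref{nestedOrDisjoint} and check that every pair of inverse descent runs in $\pi$ is nested or disjoint. For pairs $(I_u, I_v)$ with $u, v < k$ this is inherited from $\pi''$ because the position shift is strictly order-preserving. For a new pair $(I_u, I_k)$ with $u < k$, three subcases arise depending on whether the corresponding positions of $I_u''$ in $\pi''$ all lie in $[1, m_k - c_k]$, all lie in $[m_k - c_k + 1, n - c_k]$, or straddle that threshold; the first two yield disjointness of $I_u$ and $I_k$, and the third yields nesting of $I_k$ inside $I_u$ via the witnesses $x = \max(I_u \cap [1, m_k - c_k])$ and $z = \min(I_u \cap [m_k + 1, n])$. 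The remaining bookkeeping --- that the inserted block is indeed a \emph{maximal} descent run of $\pi^{-1}$ (automatic at the low end since $1$ is the global minimum, and at the high end because the value $c_k + 1$ sits at position $m_{k-1} > m_k \geq m_k - c_k + 1$), that $|I_i| = c_i$ for all $i$, and that $\Asc(\pi) = \{m_2, \ldots, m_k\}$ via \Cref{lem:nmrAscentPlus1IsNmrIdr} --- then follows routinely from the construction.
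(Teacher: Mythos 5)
Your induction is sound in its essentials, but be aware that the thesis itself never proves \Cref{prop2}: it is imported from the cited reference as already established, and the only construction actually carried out in the paper is the stack-based \Cref{alg} for the sibling statement \Cref{prop3} (prescribing $\Atop$ rather than $\Asc$), whose uniqueness half is explicitly skipped in favour of a counting remark. Your approach is therefore a genuinely different route: instead of building $\pi$ front-to-back with two stacks, you peel off the last inverse descent run, show it is rigidly the position interval $\{m_k-c_k+1,\dots,m_k\}$ carrying the values $c_k,\dots,1$ (the consecutivity argument via a forced $231$-occurrence is exactly right, as is the verification that the truncated sequences remain consistent), and recurse. What this buys is a self-contained uniqueness proof, which the paper supplies for neither proposition; what the algorithmic approach buys is an executable procedure and a more direct reading-off of which entries become ascent tops. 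One caveat deserves explicit attention: the ``if'' direction of \Cref{nestedOrDisjoint} is not true exactly as stated --- for $\pi=231$ the two inverse descent runs are the position sets $\{1,3\}$ and $\{2\}$, which are nested, yet $\pi$ contains $231$ --- so nested-or-disjoint alone does not certify avoidance. The missing hypothesis is that the value blocks decrease along the ordering of the runs, i.e.\ $\pi(I_1)>\pi(I_2)>\dots>\pi(I_k)$; with that added, a short case analysis does rule out any $231$-occurrence. Your reconstructed $\pi$ satisfies this ordering by construction (the inserted block carries the smallest $c_k$ values and has the smallest maximal position), so your existence argument goes through, but you should state that you are checking this value-ordering alongside the nested-or-disjoint condition, and likewise record the easy fact that deleting or reinserting the bottom block neither merges nor splits the remaining inverse descent runs.
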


\begin{proposition}\label{prop3}
\emph{\citet{do}}. Given a consistent pair of sequences $(c_1, c_2, \dots, c_k)$ and $(m_1, m_2, \dots, m_k)$, there is a unique permutation $\pi\in\Av'(231)$ such that:
\begin{itemize}
    \item $|I_i|=c_i$, where $I_i$ is the $i$-th \idr of $\pi$.
    \item $\Atop(\pi)=\{m_2,m_3,\dots,m_k\}$.
\end{itemize}
\end{proposition}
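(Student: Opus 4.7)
The plan is to prove \Cref{prop3} by a constructive argument that mirrors \Cref{prop2} but decodes the ascent-top data rather than the ascent-position data into the nesting tree of inverse descent runs (IDRs). Set $M_i = c_i + c_{i+1} + \cdots + c_k$, which is the largest value in $\pi(I_i)$; the consistency condition then reads as $m_i > M_i$ for $i \ge 2$.

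The first step is to characterise, for any $\pi \in \Av'(231)$ with IDR sizes $(c_1,\dots,c_k)$, the possible value of the ascent top $t_{j_0}:=\pi_{\max(I_{j_0})+1}$ at any non-root IDR $I_{j_0}$. By \Cref{lem:i1isRmax} every non-root IDR nests in $I_1$, and by \Cref{nestedOrDisjoint} any two IDRs are nested or disjoint, so the IDRs form a rooted tree whose top-level children partition each gap of a parent IDR back-to-back in sibling order. A short case analysis gives the dichotomy: $t_{j_0} = M_{j''}$ when $I_{j_0}$ has a next top-level sibling $I_{j''}$ in its gap (Case A), and $t_{j_0} = M_p - h$ when $I_{j_0}$ is the last top-level in gap $h$ of its parent $I_p$ (Case B). The value sets of the two cases are disjoint: Case A values lie in $\{M_2,\dots,M_k\}$, while Case B values lie in $\bigcup_p [M_{p+1}+1,\,M_p-1]$.

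With this dichotomy each $m_j$ decodes unambiguously. In particular, $m_2 > M_2 \ge M_i$ for all $i \ge 2$ forces $m_2$ into Case B with $p=1$, so $m_2 = n-h$ identifies the outermost non-empty gap of $I_1$. More generally, the Case B entries identify precisely which gaps of which $I_p$ are non-empty and pin down the ascent top at each gap's last top-level slot, while the Case A entries ($m_j = M_{j''}$) assert that $I_{j''}$ has a preceding sibling. I would then build the tree top-down from $I_1$: in each non-empty gap the chain of sibling slots is fixed by following Case A pointers backwards from the last slot given by Case B, and the value-range rule forced by 231-avoidance --- namely that in every $I_p$ the gaps carry contiguous value-intervals in index order, and within each gap the top-level subtrees carry contiguous value-intervals in sibling order --- uniquely decides which specific $I_j$ fills each slot. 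Recursion into each placed IDR completes the construction.

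To finish I would verify that the resulting $\pi$ lies in $\Av'(231)$ (applying \Cref{nestedOrDisjoint} to the constructed tree), that $|I_j|=c_j$ (immediate), and that $\Atop(\pi) = \{m_2,\dots,m_k\}$ (by reading off each $t_{j_0}$ from the tree and matching it against the $m_j$ that produced its slot); uniqueness is automatic because every step of the construction is forced. The main obstacle I anticipate is the IDR-to-$m$ matching: unlike \Cref{prop2}, the $m_j$ indexed by value order need not be the ascent top of $I_j$, so the bijection between non-root IDRs and ascent tops must be extracted jointly from the Case A/B data and the value-range rule rather than read off the index. The cleanest rigorous implementation is likely an induction on $k$ in which each step peels off the last top-level IDR in the outermost non-empty gap of $I_1$ (identified by $m_2$), reducing to a consistent pair of length $k-1$ and invoking the inductive hypothesis.
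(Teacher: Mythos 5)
Your route is genuinely different from the paper's. The paper proves \Cref{prop3} by exhibiting an explicit two-stack procedure (\Cref{alg}): the $c_i$ consecutive values of each inverse descent run are pushed onto a stack in decreasing order and popped to the output until the stack top is one of the prescribed ascent tops $m_2,\dots,m_k$, which is then popped as well; correctness is checked by three short lemmas (the output has length $n$, is $231$-avoiding by stack monotonicity, and has the prescribed $\Atop$ and run lengths), and uniqueness is dispatched by a counting remark. Your structural analysis is sound as far as it goes and exposes more of the anatomy of these permutations than the algorithm does: the Case A/Case B dichotomy for the ascent top attached to each non-root $I_{j_0}$ is correct and the two value sets are indeed disjoint. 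One orientation issue: the ``value-range rule'' forced by $231$-avoidance is that gap values \emph{increase} with the gap index and sibling-subtree values \emph{increase} left to right, so $m_2=n-h$ points to the \emph{leftmost} non-empty gap of $I_1$, which carries the \emph{smallest} values; ``outermost'' and ``in index order'' leave the direction ambiguous, and the opposite orientation produces $231$-occurrences.

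The genuine gap is in the reconstruction, exactly where you anticipate it. The Case B entries do determine which gaps of which $I_p$ are non-empty, but the Case A entries are not pointers: $m_j=M_{j''}$ records only the single bit ``$I_{j''}$ has a preceding sibling,'' not who that sibling is nor which gap $I_{j''}$ sits in. Consequently ``following Case A pointers backwards from the last slot'' is not an operation you can perform on the given data, and the tree shape --- how $\{I_2,\dots,I_k\}$ is partitioned among the non-empty gaps, how many top-level siblings each gap has, and which runs nest inside which --- is a constraint-satisfaction problem whose unique solvability is precisely the content of the proposition; asserting that ``every step of the construction is forced'' assumes the conclusion. Already for $c=(2,2,1,2)$ and $m=(7,6,5,4)$ two tree shapes are consistent with the A/B bits alone (either $I_4$ precedes $I_2$ in the gap of $I_1$ with $I_3$ nested in $I_2$, or $I_3$ precedes $I_2$ with $I_4$ nested in $I_2$), and only the value-contiguity constraint eliminates the second. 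Your fallback induction is the right repair, but the peeling step is where all the work lives: removing the last top-level run of the relevant gap changes $n$, permutes the sequence $(c_i)$, and converts one Case A entry of $m$ into a new Case B entry, and you must verify that the result is again a consistent pair and that re-insertion of the peeled run is forced. None of that is carried out, so as written the argument does not yet establish existence or uniqueness.
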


In order to prove \Cref{prop3}, we will present a unique construction of $\pi$ from a pair of consistent sequences, such that $\pi$ abides the given conditions. \Cref{alg} is our approach where we use two stacks to construct a 231-permutation. Operations on stacks include top (reading the top without modifying the stack), pop (reading the top and remove it from the stack), and transfer (pop from stack $\calA$ and push that to stack $\calB$). There are three main steps in the algorithm that need to be focused, namely at Line \ref{st1} (abbreviated as L\ref{st1}), L\ref{st2}, and L\ref{st3}.

\begin{algorithm}[h]
\caption{Compute $\pi$ satisfying \Cref{prop3}}\label{alg}
\begin{algorithmic}[1]
\Procedure {Transfer}{$\calA$: Stack<Int>, $\calB$: Stack<Int>}
\State top $\leftarrow \calA$.pop()
\State $\calB$.push(top)
\EndProcedure
\Statex
\Procedure {ComputePermutation}{$c$: Array<Int>, $m$: Array<Int>}
\State $(n, p, k) \leftarrow (m_1, n, c.$size())
\For{$i\leftarrow 1,2,\dots, k$}
\For{$j\leftarrow 1,2,\dots, c_i$}\label{st1}\Comment Push $(n-\sum_{t=1}^{i-1} c_t),\dots, (n-\sum_{t=1}^{i}c_t +1)$ into $\calA$.
\State $\calA$.push($p$)
\State $p\leftarrow p-1$
\EndFor\label{endst1}
\While{$\calA$ is not empty and $\calA$.top() $\notin \{m_2, m_3,\dots, m_k\}$}\label{st2}
\State \textsc{Transfer}($\calA$, $\calB$)
\EndWhile\label{endst2}
\If {$\calA$ is not empty}\label{st3}
\State \textsc{Transfer}($\calA$, $\calB$)
\EndIf\label{endst3}
\EndFor
\EndProcedure
\end{algorithmic}
\end{algorithm}

        

We can briefly discuss the idea of \Cref{alg} as follows. In each for-loop on variable $i$, we push the entries of an \idr to $\calA$ (L\ref{st1}) so that when they are transferred to $\calB$ they will preserve that order and thereby forming an \idr in the resulting permutation. While transferring entries from $\calA$ to $\calB$, we will stop at a value if it is one of the required ascent tops (L\ref{st2} and L\ref{st3}), and go to the next for-loop to receive the entries of the next \idr. The entries pushed to $\calA$ are in descending order, so the next element transferred to $\calB$ will make the element right under it an ascent top in the resulting permutation. To further understand the algorithm, let us take an example.

\begin{eg}
Find $\theta'(7642135)$.

At the beginning of this \Cref{sec:majmakl}, we said that $\theta$ is a composition of $\theta'_1$ and $\theta'_2$. The first step, $\theta'_1$, is to find the consistent pair of sequences corresponding to the given permutation as discussed in \Cref{prop2}. In order to do so, we only need to identify the ascent set and the lengths of the \idrs.
\begin{itemize}
    \item Three \idrs, $\{1,2,7\}$, $\{3,6\}$, $\{4,5\}$. That would make $c=(3,2,2)$.
    \item Set of ascents as $\{5,6\}$. That would make $m=(7,6,5)$.
\end{itemize}

In the second step, $\theta'_2$, we convert the pair of sequences into a permutation by applying the procedure \textsc{ComputePermutation} in \Cref{alg}. We demonstrate the process as below:

\newcommand{\stack}[3]{
    \begin{tabular}{c*{#1}{|c}}
        \cline{1-#1}
         #2 & #3\\
        \cline{1-#1}
    \end{tabular}
}
\newcommand{\stackA}[2]{\stack{#1}{#2}{$\calA$} }
\newcommand{\stackB}[2]{\stack{#1}{#2}{$\calB$} }
\newcommand{\ab}[4]{
    \begin{tabular}[t]{l}
        \stackA{#1}{#2}\rule{0pt}{2.8ex} \\[0.1cm]
        \stackB{#3}{#4}\rule[-1.8ex]{0pt}{0pt} 
    \end{tabular}
}
\begin{center}
\begin{tabular}{p{1.4cm}|p{6cm}|l}
    \hline
    & \textbf{Action} & \textbf{Outcome}\\
    \hline
    Initially, & $\calA$ and $\calB$ is empty. Tops of the stacks are the open ends. & \ab{1}{}{1}{}\\\hline
    Iteration $i=1$ & L\ref{st1}: Because $c_i=3$, we sequentially push $7,6,5$ to $\calA$. & \ab{4}{& 5 & 6 & 7}{1}{}\\\cline{2-3}
    & L\ref{st2}: $\mathrm{top}(\calA)=5\in M$, do nothing. & (unchanged) \\\cline{2-3}
    & L\ref{st3}: Transfer the top (5) of $\calA$ to $\calB$. & \ab{3}{& 6 & 7}{2}{& 5} \\\hline
    Iteration $i=2$ & L\ref{st1}: Because $c_i=2$, we sequentially push $4,3$ to $\calA$. & \ab{5}{& 3 & 4 & 6 & 7}{2}{& 5}\\\cline{2-3}
    & L\ref{st2}: $\mathrm{top}(\calA)=3\notin M$, we transfer it to $\calB$. & \ab{4}{& 4 & 6 & 7}{3}{& 3 & 5} \\\cline{2-3}
    & L\ref{st2}: $\mathrm{top}(\calA)=4\notin M$, we transfer it to $\calB$. & \ab{3}{& 6 & 7}{4}{& 4 & 3 & 5} \\\cline{2-3}
    & L\ref{st2}: $\mathrm{top}(\calA)=6\in M$, do nothing. & (unchanged) \\\cline{2-3}
    & L\ref{st3}: Transfer the top (6) of $\calA$ to $\calB$. & \ab{2}{& 7}{5}{& 6 & 4 & 3 & 5} \\\hline
    Iteration $i=3$ & L\ref{st1}: Because $c_i=2$, we sequentially push $2,1$ to $\calA$. & \ab{4}{& 1 & 2 & 7}{5}{& 6 & 4 & 3 & 5}\\\cline{2-3}
    & L\ref{st2}: $\mathrm{top}(\calA)=1\notin M$, we transfer it to $\calB$. & \ab{3}{& 2 & 7}{6}{& 1 & 6 & 4 & 3 & 5} \\\cline{2-3}
    & L\ref{st2}: $\mathrm{top}(\calA)=2\notin M$, we transfer it to $\calB$. & \ab{2}{& 7}{7}{& 2 & 1 & 6 & 4 & 3 & 5} \\\cline{2-3}
    & L\ref{st2}: $\mathrm{top}(\calA)=7\notin M$, we transfer it to $\calB$. & \ab{1}{}{8}{& 7 & 2 & 1 & 6 & 4 & 3 & 5} \\\hline
\end{tabular}
\end{center}

At the final step, reading stack $\calB$ from top to bottom gives us $\theta'(7642135)=7216435$. We can confirm that $\Asc(7642135)=\Atop(7216435)$ and the \idrs' lengths are the same.
\end{eg}

In order to show the correctness of \Cref{alg}, we will show that the algorithm is well-defined (\Cref{hasLengthN} and \Cref{avoid231}) and the result $\pi$ satisfies the conditions in \Cref{prop3} (\Cref{trueAtopAndIdrs}).

\begin{lemma}\label{hasLengthN}
After \Cref{st2} of \Cref{alg}, $\calA$ is empty if and only if $i=k$. As a result, $\pi$ is an $n$-permutation.
\end{lemma}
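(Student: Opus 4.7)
The plan is to track, across iterations, which values of $\{m_2, m_3, \dots, m_k\}$ enter and leave $\calA$. The crucial ingredient is the consistency inequality $m_j > c_j + c_{j+1} + \dots + c_k$ from \Cref{def:consistent seq}, which dictates in which iteration each $m_j$ is pushed onto $\calA$.

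For the ``only if'' direction I would argue the contrapositive: fix $i < k$, and show that \Cref{st2} leaves $\calA$ nonempty. For every $j$ with $2 \leq j \leq i+1$, consistency gives $m_j > c_j + \dots + c_k \geq c_{i+1} + \dots + c_k = n - \sum_{t \leq i} c_t$, so $m_j$ sits among the $\sum_{t \leq i} c_t$ largest values, which are exactly the values pushed in iterations $1$ through $i$ by \Cref{st1}. In particular, the $i$ distinct values $m_2, m_3, \dots, m_{i+1}$ have all been pushed to $\calA$ by the time iteration $i$'s \Cref{st2} begins. On the other hand, values from $\{m_2, \dots, m_k\}$ can only leave $\calA$ via \Cref{st3}, since the while loop in \Cref{st2} only pops elements \emph{outside} $\{m_2, \dots, m_k\}$; and \Cref{st3} fires at most once per iteration, so at most $i - 1$ such values have been removed during iterations $1, \dots, i-1$. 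Hence at least one of $m_2, \dots, m_{i+1}$ still sits in $\calA$, and the while loop stops at (or above) it without emptying the stack.

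For the ``if'' direction, iterating the same count shows that in each of iterations $1, 2, \dots, k-1$ the top of $\calA$ at the end of \Cref{st2} lies in $\{m_2, \dots, m_k\}$, so \Cref{st3} transfers exactly one such value per iteration. Since $|\{m_2, \dots, m_k\}| = k-1$, this exhausts the set by the end of iteration $k-1$. Therefore, when iteration $k$'s \Cref{st2} begins, $\calA$ contains no element of $\{m_2, \dots, m_k\}$ at all, and the while loop pops every remaining entry, leaving $\calA$ empty. For the final ``as a result'' claim: by the end of iteration $k$ every value in $\{1, 2, \dots, n\}$ has been pushed to $\calA$ exactly once (the pushes over all iterations have the values $n, n-1, \dots, 1$) and subsequently transferred to $\calB$, so reading $\calB$ from top to bottom gives a bijection $[n] \to [n]$, i.e.\ an $n$-permutation.

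The main obstacle I anticipate is the pushing step: converting the consistency inequality into a clean statement of the form ``by the end of iteration $i$'s \Cref{st1}, the values $m_2, \dots, m_{i+1}$ all lie on $\calA$'' without getting lost in index arithmetic. Once that lemma-within-a-lemma is set up, the remainder of the proof is essentially a pigeonhole argument using $|\{m_2, \dots, m_k\}| = k - 1$.
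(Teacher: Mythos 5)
Your proof is correct, and for the harder direction ($i<k$ implies $\calA$ nonempty after \Cref{st2}) it is organized differently from the paper's. The paper splits into two cases according to the state of $\calA$ just before \Cref{st1} of iteration $i$: either some element of $\{m_2,\dots,m_k\}$ already sits in $\calA$ (and the while loop cannot pop past it), or none does, in which case it deduces from the consistency inequality that $n-\sum_{t\le i}c_t < m_{i+1}\le n-\sum_{t\le i-1}c_t$, so that $m_{i+1}$ in particular is pushed during \Cref{st1} of iteration $i$ and blocks \Cref{st2}. Your pigeonhole count --- the $i$ values $m_2,\dots,m_{i+1}$ have all entered $\calA$ by the end of iteration $i$'s \Cref{st1}, while at most $i-1$ of them can have left, one per execution of \Cref{st3} --- reaches the same conclusion uniformly, without the case split, and it makes explicit the two facts the paper uses only implicitly (that \Cref{st2} never removes an element of $\{m_2,\dots,m_k\}$, and that \Cref{st3} removes at most one element per iteration). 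The paper's version yields slightly more information, namely \emph{which} element blocks the loop, but that is not needed for this lemma. Your treatment of the $i=k$ case and of the concluding claim that $\pi$ is an $n$-permutation matches the paper's. One small point worth writing out in your ``if'' direction: the element moved by \Cref{st3} in each of iterations $1,\dots,k-1$ necessarily belongs to $\{m_2,\dots,m_k\}$, because the while loop terminated with $\calA$ nonempty and hence with its top failing the condition $\mathrm{top}(\calA)\notin\{m_2,\dots,m_k\}$; with that observation recorded, your count of exactly $k-1$ removals, exhausting the set before iteration $k$, is airtight.
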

\begin{proof}
First, consider the case $i<k$, we prove that $\calA$ is always non-empty after \Cref{st2}. Before \Cref{st1}:

\begin{itemize}
    \item If there is an entry $x\in\calA$ such that $x\in M$, then in \Cref{st2} we cannot empty the whole stack (due to the condition that top($\calA)\notin M$).
    \item If there is no such entry $x$, then we realize that $m_2, m_3, \dots, m_i$ are among the $\sum_{t=1}^{i-1}c_t$ largest elements of $[n]$. and were already pushed to $\calB$. It follows that $m_{i+1}$ is not among them, i.e. $m_{i+1}\leq n-\sum_{t=1}^{i-1}\Leftrightarrow m_{i+1}\leq \sum_{t=i}^k c_t$. Furthermore, the consistent pair gives us that $m_{i+1} > \sum_{t=i+1}^k c_t$, thus $n-\sum_{t=1}^i<m_{i+1}\leq n-\sum_{t=1}^{i-1}$. This means that $m_{i+1}$ will be pushed to $\calA$ in step $\Cref{st1}$, which will prevent \Cref{st2} from emptying $\calA$.
\end{itemize}

Therefore, when $i<k$, $\calA$ is always non-empty after \Cref{st2}. As a result, in each iteration we managed to transfer from $\calA$ to $\calB$ an entry $x\in M$, and in the last iteration ($i=k$), we simply have no such entries left. The condition top($\calA)\notin M$ will remain always true, and \Cref{st2} will empty the whole stack $\calA$.
\end{proof}

\begin{lemma}\label{avoid231}
The result $\pi$ of \Cref{alg} is 231-avoiding.
\end{lemma}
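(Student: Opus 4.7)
My plan is to invoke \Cref{nestedOrDisjoint} and reduce the 231-avoidance of $\pi$ to showing that any two \idrs of $\pi$ are nested or disjoint. The approach has two main stages: first, identify the \idrs of $\pi$ explicitly from the algorithm's action; second, deduce nestedness/disjointness from the LIFO behavior of $\calA$.

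For the first stage, let $B_i = \{n - \sum_{t=1}^{i} c_t + 1, \dots, n - \sum_{t=1}^{i-1} c_t\}$ denote the block of consecutive values pushed to $\calA$ during iteration $i$. Since the blocks are in strictly decreasing value order across iterations and each $B_i$ is pushed in descending order within L\ref{st1}, a straightforward induction shows that $\calA$ always holds its contents in strictly decreasing order from bottom to top. Consequently every pop removes the current minimum, so the $c_i$ values of $B_i$ are popped in ascending order over the course of the algorithm. Since each pop lands on the top of $\calB$ and $\pi$ is read top-to-bottom from $\calB$, later pops correspond to earlier positions in $\pi$. It follows that the positions $q^i_1 < \dots < q^i_{c_i}$ of $B_i$-values in $\pi$ satisfy $\pi_{q^i_j} = \max B_i - j + 1$, which is precisely an \idr on the block $B_i$. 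The blocks partition $[n]$, so the sets $I_i := \{q^i_1, \dots, q^i_{c_i}\}$ give all $k$ \idrs of $\pi$; the ordering $\max I_1 > \dots > \max I_k$ follows because $\min B_i$ sits on top of $\calA$ right after L\ref{st1} and is therefore the first value popped in iteration $i$.

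For the second stage, fix $u < v$ and observe that after L\ref{st1} of iteration $v$, the block $B_v$ sits above all leftover in $\calA$, and every push made subsequently (from $B_w$ with $w > v$) is placed above $B_v$. Because pops only remove the top of $\calA$, any pop occurring strictly between two pops of $B_v$-elements must come from some $B_w$ with $w > v$, never from $B_u$ with $u < v$. Translated to $\pi$, this says no position of $I_u$ lies strictly between two positions of $I_v$. Combined with $\max I_v < \max I_u$ from the first stage, only two configurations survive: either $\max I_v < \min I_u$ (so $I_u$ and $I_v$ are disjoint), or $I_v$ fits entirely inside a single gap between consecutive elements of $I_u$ (so $I_v$ is nested in $I_u$). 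Then \Cref{nestedOrDisjoint} concludes $\pi \in \Av(231)$.

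The main obstacle I expect is rigorously establishing the sorted-stack invariant for $\calA$ together with the companion fact that $\min B_i$ is the first value popped in iteration $i$, handling both subcases $\min B_i \in M$ (settled by L\ref{st3}) and $\min B_i \notin M$ (settled by L\ref{st2}). Once those invariants are in hand, the contiguity argument between consecutive $B_v$-pops is essentially immediate from stack semantics, and the final classification as nested or disjoint follows directly.
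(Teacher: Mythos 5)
Your proof is correct, but it takes a genuinely different route from the paper's. The paper argues directly by contradiction on a hypothetical 231-occurrence $(x,y,z)$: using the same sorted-stack invariant you establish (the contents of $\calA$ are always increasing from top to bottom), it notes that at the moment $\pi_z$ is popped both $\pi_x$ and $\pi_y$ are still in $\calA$, that $\pi_y$ must sit closer to the top because position $y>x$ forces $\pi_y$ to be popped earlier, and that this forces $\pi_y<\pi_x$, contradicting the pattern. That is a three-sentence argument requiring no auxiliary structure. You instead identify the \idrs of $\pi$ explicitly as the position sets of the value blocks $B_i$, show from LIFO semantics that these sets are pairwise nested or disjoint, and invoke \Cref{nestedOrDisjoint}. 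Your route is longer and leans on \Cref{nestedOrDisjoint}, but it buys something the direct argument does not: the explicit description of the \idrs and of which value is popped first in each iteration is essentially the content of \Cref{trueAtopAndIdrs}, so your first stage does double duty. One small point to tighten: \Cref{nestedOrDisjoint} is a statement about the actual (maximal) \idrs, so you should check that adjacent blocks cannot merge into a single \idr of $\pi$. This follows from a fact you already record --- $\min B_i$ is the first value popped in iteration $i$, hence it is popped before $\max B_{i+1}=\min B_i-1$ is even pushed, so $\min B_i$ lands at a later position than $\max B_{i+1}$ and the two runs in $\pi^{-1}$ do not concatenate --- but the connection to maximality should be stated.
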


\begin{proof}
Suppose that it contains a 231-occurrence $(x,y,z)$. We know for a fact that $\calA$ is always monotonic increasing (from top to bottom). When $\pi_z$ was popped from $\calA$ and pushed to $\calB$, all entries larger than $\pi_z$, except those already in $\calB$, are all in $\calA$. If we ever push more elements to $\calA$, they are certainly smaller than $\pi_z$. Since $x<y$,  $\pi_y$ must have been popped from $\calA$ before $\pi_x$, so $\pi_y$ must be closer to the top than $\pi_x$. However, we have that $\pi_y>\pi_x$ and it contradicts the monotonic increasing property of $\calA$.
\end{proof}

\begin{lemma}\label{trueAtopAndIdrs}
The result $\pi$ of \Cref{alg} has:
\begin{itemize}
    \item $\Atop(\pi)=\{m_2, m_3,\dots, m_k\}$
    \item $k$ \idrs $I_1, I_2, \dots, I_k$ (descending ordered by the maximum element of each set)
    \item $I_j=c_j$ (with $1\leq j\leq k$)
\end{itemize}
\end{lemma}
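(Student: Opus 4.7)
The plan is to track, for each iteration $j$, the block $B_j := \{n - \sum_{t=1}^{j} c_t + 1, \ldots, n - \sum_{t=1}^{j-1} c_t\}$ of $c_j$ consecutive integers that \Cref{st1} pushes onto $\calA$ during iteration $j$ (largest first, so $\min(B_j)$ sits on top of the $B_j$-sub-stack). These blocks partition $[n]$. Let $P_j \subseteq [n]$ denote the set of positions in $\pi$ that eventually hold the values of $B_j$. I aim to show $I_j = P_j$ (hence $|I_j| = c_j$) and $\Atop(\pi) = M$, using the observation that the $t$-th push to $\calB$ becomes $\pi_{n-t+1}$, so later transfers occupy positions further to the \emph{left} in $\pi$.

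For the \idr claims, first note that since $\calA$ remains monotone increasing from top to bottom (a fact already exploited in \Cref{avoid231}), the values of $B_j$ leave $\calA$ in strictly increasing order; combined with the leftward-push principle, this places $\max(B_j)$ at the leftmost position of $P_j$ and $\min(B_j)$ at the rightmost, so $\pi$ restricted to $P_j$ reads (left-to-right) as $\max(B_j), \max(B_j) - 1, \ldots, \min(B_j)$, an arithmetic sequence of common difference $-1$. Maximality of this descent run in $\pi^{-1}$ then reduces to two assertions: (i) for $j < k$, the value $\min(B_j) - 1 = \max(B_{j+1})$ only enters $\calA$ in iteration $j+1$'s \Cref{st1}, strictly after $\min(B_j)$ has exited during iteration $j$ (it sits on top of $\calA$ right after iteration $j$'s \Cref{st1} and must leave in either \Cref{st2} or \Cref{st3} of that same iteration), so $\max(B_{j+1})$ lands left of $\min(B_j)$ in $\pi$; (ii) for $j > 1$, the value $\max(B_j) + 1 = \min(B_{j-1})$ leaves $\calA$ during iteration $j-1$, while $\max(B_j)$ sits at the bottom of its sub-stack and cannot exit until every element of $B_j$ above it has been transferred, so $\min(B_{j-1})$ lands right of $\max(B_j)$ in $\pi$. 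Both conditions obstruct extending the descent run beyond $B_j$, so each $P_j$ is a maximal descent run in $\pi^{-1}$, i.e.\ a full \idr of $\pi$; the $P_j$'s partition $[n]$ and are naturally ordered by $\max(B_j)$ in accordance with the \idr convention, yielding $I_j = P_j$.

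For the Atop claim, I classify consecutive pairs of pushes to $\calB$. Within a single \Cref{st2} phase, successive pops from $\calA$'s top are strictly increasing, and the transition \Cref{st2}$\to$\Cref{st3} (when it occurs) pops a value deeper in $\calA$, hence even larger; in each such consecutive pair the newer (leftward-placed) entry exceeds the older (rightward-placed) one, producing a \emph{descent} in $\pi$. The only remaining consecutive pair that can be an ascent is the one straddling an iteration boundary: iteration $j$'s \Cref{st3} pushes some $m \in M$, and the next push to $\calB$ comes from iteration $j+1$. Because every value pushed in iteration $j+1$'s \Cref{st1} is at most $n - \sum_{t=1}^{j} c_t$, which is strictly less than $m$ (since $m$ was pushed in some iteration $j' \leq j$, forcing $m \geq n - \sum_{t=1}^{j'} c_t + 1$), the next $\calB$-entry is smaller than $m$ and sits immediately to its left in $\pi$, so $m$ is an ascent top. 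Iterations $1, \ldots, k-1$ thereby contribute exactly $k-1$ distinct ascent tops, all lying in $M$; since $|M| = k - 1$, we conclude $\Atop(\pi) = M$.

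The main obstacle is the maximality claim (ii): one must verify that elements pushed in earlier iterations but still lingering in $\calA$ do not disturb the relative order in which $\min(B_{j-1})$ and $\max(B_j)$ actually exit the stack. The monotonicity of $\calA$ together with the consecutive-integer structure of each $B_j$ pins down this order exactly, since relative popping order is entirely dictated by value order, which makes the argument tractable.
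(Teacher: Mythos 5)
Your proof is correct and takes essentially the same route as the paper's: by the top-to-bottom monotonicity of $\calA$, every entry transferred at \Cref{st2} is followed by a larger entry placed to its left (so it is a descent bottom), the $k-1$ entries transferred at \Cref{st3} are exactly the elements of $M$ and each becomes an ascent top via the smaller block pushed in the next iteration, and the blocks pushed at \Cref{st1} exit $\calA$ in increasing order and hence form the inverse descent runs of lengths $c_1,\dots,c_k$. The only (harmless) divergence is that you verify maximality of each block as a descent run in $\pi^{-1}$ by hand, whereas the paper obtains the count of inverse descent runs for free from \Cref{lem:nmrAscentPlus1IsNmrIdr} once $\Atop(\pi)=M$ is established.
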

\begin{proof}
Suppose that $m_j$ ($j>1$) is pushed to $\mathscr{B}$ in \Cref{st3} of iteration $i<k$. At iteration $i+1$, more entries are pushed to $\mathscr{A}$ and it's easy to see that all of these entries are smaller than $m_j$, and the next entry transferred to $\mathscr{B}$ will be one of them. Therefore $m_j$ is indeed an ascent-top. On the other hand, given $x\notin \{m_2, m_3,\dots, m_k\}$, it must have been pushed to $\mathscr{B}$ at \Cref{st2}. Due to the top-to-bottom increasing order of $\mathscr{A}$, the next letter pushed to $\mathscr{B}$ after $x$ will be larger than $x$, making $x$ a descent bottom, rather than an ascent top. Therefore, $\Atop=\{m_2,m_3,\dots,m_k\}.$

Because $\pi$ has $k-1$ ascent tops, it has $k$ \idrs (\cref{lem:nmrAscentPlus1IsNmrIdr}). Moreover, for some $i\leq k$, we pushed $(n-\sum_{t=1}^{i-1} c_t),(n-\sum_{t=1}^{i-1} c_t-1),\dots, (n-\sum_{t=1}^{i}c_t +1)$ sequentially in \Cref{st1} of iteration $i$. Therefore, they will appear in that order in $\pi$, forming an \idr of length $c_i$.
\end{proof}

The proof of uniqueness will be skipped, as it is routine to count that, given $n$, the number of consistent pairs of sequences (with respect $n$ and $k\leq n$) is equal to number of 231-avoiding permutations of length $n$ and starting with $n$.

As a result of \Cref{prop2} and \Cref{prop3}, the defined bijection $\theta'$ proves that \Cref{thm:ascatop} is a true statement. Now, we are interested in the original problem of \Cref{sec:majmakl}, the equidistribution of maj and makl on $\Av(231)$. As we mentioned, after finding $\theta'$, we will define $\theta$ in terms of $\theta'$; or more precisely, we define it as a \textit{direct sum} involving $\theta'$.

\begin{definition}
Given two permutations $\alpha$ and $\beta$, their \textit{direct sum} $\pi=\alpha\oplus\beta$ of length $|\alpha|+|\beta|$ is defined as:

$$
\pi_i=\left\{\begin{matrix}
    \alpha_i & \text{if} & i\leq |\alpha|\\
    \beta_{i-|\alpha|}+|\alpha| & \text{if} & i > |\alpha|
\end{matrix}\right.
$$
\end{definition}

Given $\pi\in\Av(231)$, let $\pi^{(1)}, \pi^{(2)}, \dots, \pi^{(t)}\in \Av'(231)$ such that
$\pi=\pi^{(1)}\oplus \pi^{(2)}\oplus \dots\oplus \pi^{(t)}$. Define $\theta$ as:
$\theta(\pi)=\theta'(\pi^{(1)})\oplus\theta'(\pi^{(2)})\oplus\dots\oplus\theta'(\pi^{(t)})$, then we have this result from \cite{do}:

\begin{theorem}\label{thm:majmakl}\emph{\citet{do}}.
$\theta$ transforms $\maj$ to $\makl$ on $\Av(231)$.
\end{theorem}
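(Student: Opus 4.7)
The plan is to reduce the statement on $\Av(231)$ to the corresponding statement on $\Av'(231)$ via the direct-sum decomposition, and on $\Av'(231)$ to translate $\maj\to\makl$ into sums over index sets so that the already-established correspondence $\Asc(\sigma)=\Atop(\theta'(\sigma))$ from \Cref{thm:ascatop} does the rest.

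First I would verify that both statistics satisfy the same direct-sum recursion
$$f(\alpha\oplus\beta)=f(\alpha)+f(\beta)+|\alpha|\,\des(\beta).$$
For $\maj$ this is immediate, because position $|\alpha|$ is always an ascent of $\alpha\oplus\beta$, so descents split cleanly between $\alpha$ and $\beta$. For $\makl$ one has to split each vincular pattern in $\makl=1\u{32}+2\u{31}+\u{32}1+\u{21}$ across the join; since values in the $\beta$-block all exceed those in the $\alpha$-block, only $1\u{32}$ admits cross-block occurrences (the ``1'' anywhere in $\alpha$ together with any $\u{32}$-descent of $\beta$), contributing exactly $|\alpha|\,\des(\beta)$. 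Since $\theta'$ preserves $\asc$ (it sends $\Asc$ bijectively to $\Atop$, and $|\Atop(\tau)|=\asc(\tau)$), it also preserves $\des$ and length; hence it suffices to handle one direct-sum component, i.e.\ to prove $\maj(\sigma)=\makl(\theta'(\sigma))$ for every $\sigma\in\Av'(231)$.

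For the single-component case, the key identity I would establish is
$$\makl(\tau)=\sum_{y\in\Dbot(\tau)}y\qquad(\tau\in\Av(231)).$$
On $\Av(231)$ the pattern $2\u{31}$ is forbidden, so $\makl(\tau)=1\u{32}(\tau)+\u{32}1(\tau)+\u{21}(\tau)$. Grouping occurrences by descent position $i$ with bottom $a=\tau_{i+1}$, the three patterns contribute respectively $|\{j<i:\tau_j<a\}|$, $|\{j>i+1:\tau_j<a\}|$, and $1$. Because $\tau_i>a$ and $\tau_{i+1}=a$, the excluded indices $i,i+1$ contribute nothing, so the first two counts sum to $|\{j:\tau_j<a\}|=a-1$. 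Adding $1$ gives a contribution of $a$ per descent, hence $\sum_{i\in\Des(\tau)}\tau_{i+1}=\sum_{y\in\Dbot(\tau)}y$.

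Finally I would match the two sides on $\Av'(231)$. For $\sigma\in\Av'(231)$ we have $\sigma_1=n$, so every value in $[n-1]$ is either an ascent top or a descent bottom, giving $\Dbot(\sigma)=[n-1]\setminus\Atop(\sigma)$ and therefore $\sum_{y\in\Dbot(\sigma)}y=\binom{n}{2}-\sum_{y\in\Atop(\sigma)}y$. On the other hand $\maj(\sigma)=\binom{n}{2}-\sum_{i\in\Asc(\sigma)}i$ holds for any permutation, and \Cref{thm:ascatop} supplies $\Asc(\sigma)=\Atop(\theta'(\sigma))$ as sets. Combining these yields $\maj(\sigma)=\makl(\theta'(\sigma))$, and the direct-sum recursion then propagates the identity to $\maj(\pi)=\makl(\theta(\pi))$ for every $\pi\in\Av(231)$ by induction on the number of components. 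The main obstacle I anticipate is locating and proving the lemma $\makl=\sum_{y\in\Dbot}y$ on $\Av(231)$: the per-descent counting itself is short, but it depends crucially on $231$-avoidance to kill $2\u{31}$, and a parallel cross-block analysis is required to justify the $\makl$ direct-sum recursion.
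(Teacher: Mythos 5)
Your proposal is correct, and it is worth noting that the paper itself does not prove \Cref{thm:majmakl} at all --- it is quoted from \citet{do} immediately after the definition of $\theta$ as a direct sum of $\theta'$'s --- so what you have written is a genuine, self-contained argument rather than a variant of one in the text. Each step checks out: the direct-sum recursion $f(\alpha\oplus\beta)=f(\alpha)+f(\beta)+|\alpha|\,\des(\beta)$ holds for both $\maj$ and $\makl$ (only $1\u{32}$ can straddle the join, since every $\beta$-value exceeds every $\alpha$-value and the join position is an ascent, so no adjacent descent crosses it); the identity $1\u{32}(\tau)+\u{32}1(\tau)+\u{21}(\tau)=\sum_{y\in\Dbot(\tau)}y$ is valid (indeed for arbitrary $\tau$; $231$-avoidance is needed only to kill $2\u{31}$, which is exactly a vincular $231$); and for $\sigma\in\Av'(231)$ the complementation $\Dbot=[n-1]\setminus\Atop$ (using $\sigma_1=n$) against $\Des=[n-1]\setminus\Asc$ converts $\Asc(\sigma)=\Atop(\theta'(\sigma))$ from \Cref{thm:ascatop} into $\maj(\sigma)=\makl(\theta'(\sigma))$. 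Your reduction also uses, correctly, that $\theta'$ preserves length and $\des$, which follows from $|\Atop|=\asc$. Methodologically this is very much in the spirit of the paper's own combinatorial arguments (compare \Cref{lem:form2(13)} and the pattern-by-pattern bookkeeping in \Cref{sec:bastfoze} and \Cref{sec:foze''inv}), and your intermediate identity $\makl(\tau)=\sum_{y\in\Dbot(\tau)}y$ on $\Av(231)$ would be a worthwhile addition to the paper's closing table of proven equalities.
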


In the next section, we will see how \Cref{thm:majmakl} will help us prove the equidistribution of bast and foze on Av(312).

\section{Equidistribution of bast and foze on Av(312)}\label{sec:bastfoze}

The equidistribution of base and foze on Av(312) has been listed as a conjecture in \citet{amini}, and in this section we will prove that it is indeed correct. Previously, we have shown an equidistribution on Av(231) through bijection $\theta$; and notice how $c\circ r(231)=312$ and $c\circ r(312)=231$. This sparks an idea that we could convert a permutation in Av(312) to one in Av(231) through $c\circ r$, then apply $\theta$, then convert the result back to one in Av(312) through $c\circ r$. We have the following illustration and theorem:
$$\Av(312) \xrightarrow{c\ \circ\ r} \Av(231) \xrightarrow{\theta}\Av(231)\xrightarrow{c\ \circ\  r}\Av(312)$$

\begin{theorem}\label{thm:bastfoze}
The bijection composition $\Theta=c\circ r\circ \theta\circ c\circ r$ transforms $\bast$ to $\foze$ on $\Av(312)$.
\end{theorem}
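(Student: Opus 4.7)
The plan is to use $c \circ r$ to transport the statement from $\Av(312)$ to $\Av(231)$ and invoke \Cref{thm:majmakl}. Since $c \circ r$ is an involution sending the classical pattern $312$ to $231$, it bijects $\Av(312)$ with $\Av(231)$, making $\Theta = c \circ r \circ \theta \circ c \circ r$ automatically a bijection on $\Av(312)$; what remains is $\bast(\pi) = \foze(\Theta(\pi))$ for every $\pi \in \Av(312)$.

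Applying the identity $\sigma(\pi) = (c \circ r)(\sigma)(c \circ r(\pi))$ summand-by-summand, using the reverse and complement rules for vincular patterns from \Cref{sec:equistats}, a direct computation gives
\[
(c \circ r)(\bast) = 2\underline{13} + 1\underline{32} + 3\underline{21} + \underline{21},\qquad (c \circ r)(\foze) = 2\underline{13} + 1\underline{32} + \underline{32}1 + \underline{21}.
\]
For $\tau = c \circ r(\pi) \in \Av(231)$, the pattern $2\underline{31}$ refines $231$ and hence vanishes on $\tau$; inserting this ``free'' zero term into each sum reassembles $\maj$ and $\makl$, yielding
\[
\bast(\pi) = 2\underline{13}(\tau) + \maj(\tau),\qquad \foze(\Theta(\pi)) = 2\underline{13}(\theta(\tau)) + \makl(\theta(\tau)).
\]
\Cref{thm:majmakl} supplies $\makl(\theta(\tau)) = \maj(\tau)$, so the theorem reduces to the single identity $2\underline{13}(\tau) = 2\underline{13}(\theta(\tau))$ for every $\tau \in \Av(231)$, i.e.\ that $\theta$ preserves the vincular statistic $2\underline{13}$ on $\Av(231)$.

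This preservation is the main obstacle and does not follow formally from \Cref{thm:majmakl}. I would prove it in two layers. Decomposing $\tau = \tau^{(1)} \oplus \cdots \oplus \tau^{(t)}$ with each $\tau^{(s)} \in \Av'(231)$, every $2\underline{13}$-occurrence of $\tau$ is either internal to one summand or has its middle index at the last position of some $\tau^{(s)}$ and its last index at the first position of $\tau^{(s+1)}$; using \Cref{lem:i1isRmax}, the crossing contribution is exactly $|\Rmax(\tau^{(s)})| - 1 = |I_1(\tau^{(s)})| - 1$ per gap, which $\theta$ leaves invariant because $\theta'$ preserves $c_1$ on each summand and the block decomposition of $\theta(\tau)$ mirrors that of $\tau$. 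It then suffices to prove $\theta'$ preserves $2\underline{13}$ on $\Av'(231)$. For this I would trace \Cref{alg}: the adjacent ascents of $\theta'(\pi)$ appear precisely at the junctions between consecutive iterations, with ascent top the value pushed in \Cref{st3} of the earlier iteration and ascent bottom the first value pushed in the next. The entries lying to the left in $\theta'(\pi)$ with value strictly between the two are exactly those still resting on $\calB$ from earlier pushes; the delicate technical step is to match this count with the analogous count of intermediate values around the adjacent ascent $\max(I_j)$ in $\pi$ (whose location is given by \Cref{lem:nmrAscentPlus1IsNmrIdr}), which should reduce to a bookkeeping identity using that each value block is a consecutive integer interval whose membership is prescribed by the sequence $c$ alone.
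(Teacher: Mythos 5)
Your reduction is exactly the one the paper uses: transport the problem through $c\circ r$, compute the images of the vincular sums (your expressions for $(c\circ r)(\bast)$ and $(c\circ r)(\foze)$ agree with the paper's), insert the vanishing $2\u{31}$ term to reassemble $\maj$ and $\makl$, and invoke \Cref{thm:majmakl}. You also correctly isolate the single remaining obligation, namely that $\theta$ preserves the statistic $2\u{13}$ on $\Av(231)$.

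That obligation is where your proposal stops short of a proof. Your plan --- decompose $\tau$ into direct summands, count the crossing occurrences (your count of $\rmax(\tau^{(s)})-1$ per gap is correct, via \Cref{lem:i1isRmax}), then trace \Cref{alg} to show that $\theta'$ preserves $2\u{13}$ on each block of $\Av'(231)$ --- leaves the per-block step as an admittedly ``delicate technical'' bookkeeping identity that you never carry out, so the central equality $2\u{13}(\tau)=2\u{13}(\theta(\tau))$ remains unproven. The paper closes this gap with no algorithm-tracing at all: \Cref{lem:form2(13)} gives the closed formula $2\u{13}(\sigma)=n-\rmax(\sigma)-\rmin(\sigma)+1$ for every $\sigma\in\Av(231)$, and $\theta$ preserves $\Rmax$ and $\Rmin$ (imported from \cite{do}; concretely $\rmax=|I_1|=c_1$ by \Cref{lem:i1isRmax} and $\rmin=k$, both fixed by the consistent-sequence data that $\theta'$ keeps), so preservation of $2\u{13}$ is immediate and no direct-sum case analysis is even needed. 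That formula is precisely the identity your stack-tracing sketch is groping toward; substituting it for the unfinished bookkeeping would complete your argument, but as written the proof has a genuine hole at its key step.
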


In order to prove it, let us take a look at this lemma first:

\begin{lemma}\label{lem:form2(13)}
    Given $\pi\in\Av(231)$, the number of $2\underline{13}$-occurrences is $n-\rmax(\pi)-\rmin(\pi)+1$.
\end{lemma}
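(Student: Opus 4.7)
The plan is a direct bijective proof. I will exhibit a bijection $\phi$ between $2\underline{13}$-occurrences of $\pi$ and the positions of $\pi$ that are neither right-to-left maxima nor right-to-left minima (call these \emph{neutral} positions). First, the right-hand side of the identity counts exactly these: position $n$ is trivially both an rmax and an rmin, while no earlier position can be both (it would require $\pi_k > \pi_l$ and $\pi_k < \pi_l$ for the same $l > k$). Inclusion--exclusion then gives $|\Rmax(\pi) \cup \Rmin(\pi)| = \rmax(\pi) + \rmin(\pi) - 1$, so the number of neutral positions is $n - \rmax(\pi) - \rmin(\pi) + 1$.

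The bijection will be the map $\phi(i, j, j+1) := i$. Well-definedness into the neutral positions is immediate: the existence of $\pi_{j+1} > \pi_i$ at a later position keeps $i$ from being an rmax, and the existence of $\pi_j < \pi_i$ at a later position keeps $i$ from being an rmin.

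For the inverse direction, I would fix a neutral $i$ and partition the suffix positions $\{i+1, \ldots, n\}$ into $A = \{l > i : \pi_l > \pi_i\}$ and $B = \{l > i : \pi_l < \pi_i\}$, both nonempty by neutrality. The key claim, and the only place where the assumption $\pi \in \Av(231)$ enters, is that $\max B < \min A$: otherwise, taking $l_A = \min A$ and any $l_B \in B$ with $l_B > l_A$, the triple at positions $i < l_A < l_B$ has values $\pi_{l_B} < \pi_i < \pi_{l_A}$ and thus forms a $231$-pattern, contradicting the hypothesis. Since $A$ and $B$ partition $\{i+1, \ldots, n\}$, this forces $B = \{i+1, \ldots, \max B\}$ and $A = \{\max B + 1, \ldots, n\}$; setting $j := \max B$, the triple $(i, j, j+1)$ is the unique $2\underline{13}$-occurrence with first coordinate $i$, delivering both surjectivity and injectivity of $\phi$.

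The main obstacle is precisely the structural inequality $\max B < \min A$, which isolates the role of 231-avoidance in a single one-line argument by capturing how it constrains the suffix past any neutral position; everything else is essentially bookkeeping, and combining the bijection with the count yields the lemma.
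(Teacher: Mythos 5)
Your proof is correct, but it takes a genuinely different route from the one in the paper. The paper counts occurrences by grouping them on the \emph{middle} coordinate: it shows (using \Cref{lem:nmrAscentPlus1IsNmrIdr} and \Cref{nestedOrDisjoint}) that every occurrence has the form $(x,y,y+1)$ with $y=\max(I_j)$ for some $j>1$ and that the admissible $x$ are exactly the other $|I_j|-1$ elements of that inverse descent run, then sums $\sum_{j\ge 2}(|I_j|-1)$ and converts $|I_1|$ and $k$ into $\rmax(\pi)$ and $\rmin(\pi)$ via \Cref{lem:i1isRmax} and the remark that the maxima of the runs form $\Rmin(\pi)$. You instead group by the \emph{first} coordinate and exhibit a clean bijection $(i,j,j+1)\mapsto i$ onto the positions that are neither right-to-left maxima nor right-to-left minima; your key structural claim that the suffix past a neutral position $i$ splits as a block of values below $\pi_i$ followed by a block of values above $\pi_i$ (i.e.\ $\max B<\min A$) is exactly where $231$-avoidance enters, and it correctly yields both existence and uniqueness of the occurrence attached to $i$. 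What your approach buys is self-containedness: it needs none of the inverse-descent-run machinery and makes the quantity $n-\rmax(\pi)-\rmin(\pi)+1$ combinatorially transparent as a count of neutral positions. What the paper's approach buys is coherence with the rest of \Cref{sec:majmakl}, since the same run decomposition is reused throughout. All steps of your argument check out, including the inclusion--exclusion count $|\Rmax(\pi)\cap\Rmin(\pi)|=\{n\}$ and the verification that $j=\max B$ gives $j+1=\min A$.
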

\begin{proof}
Let the \idrs be $I_1, I_2, \dots I_k$. Let $(x, y, y+1)$ be a possible occurrence of $2\underline{13}$. It is easy to notice that $y$ is an ascent and by \Cref{lem:nmrAscentPlus1IsNmrIdr} there is a value $1<j\leq k$ such that $y=\max(I_j)$. Let $i,t$ be integers where $y+1\in I_{i}$ and $x\in I_t$, then $i<j$.

If $t = j$, we immediately have that $\pi_y<\pi_x$ and $\pi_x<\pi_{y+1}$, thus $(x, y, y+1)$ is an occurrence of $2\underline{13}$. Consider other cases:

\begin{itemize}
    \item $t \leq i \Rightarrow \pi_x > \pi_{y+1} \Rightarrow (x,y,y+1)$ is not an occurrence of $2\underline{13}$.
    \item $i < t < j \Rightarrow \max(I_i) > \max(I_t) > \max(I_j)=y$. If $y+1=\max(I_i)$, then this case would never happen because no integers exist between $y$ and $y+1$. Suppose otherwise, consider four positions $x < y+1 < \max(I_t) < \max(I_i)$. They are respectively belong to $I_t, I_i, I_t, I_i$, making $I_i$ and $I_t$ neither nested nor disjoint, contradict to \Cref{nestedOrDisjoint}. Therefore this case would never happen.
    \item $t > j \Rightarrow \pi_x < \pi_y \Rightarrow (x,y,y+1)$ is not an occurrence of $2\underline{13}$.
\end{itemize}

$\Rightarrow$ For each ascent $y=\max(I_j)$ there are $|I_j| - 1$ possible values of $x$ such that $(x,y,y+1)$ is an occurrence of $2\underline{13}$.

Summing up that formula across all $1 < j\leq k$, we obtain the number of $2\underline{13}$ occurrences is:

\begin{align*}
2\underline{13}(\pi)&=\sum_{j=2}^k (|I_j|-1)=\sum_{j=2}^k |I_j|-(k-1)\\
&=n-|I_1|-k+1=n-\rmax(\pi)-\rmin(\pi)+1    
\end{align*}
\end{proof}

Now that the lemma is confirmed, let us go back to the original problem, \Cref{thm:bastfoze}.

\begin{proof}\emph{\Cref{thm:bastfoze}}.
From \Cref{thm:majmakl} we know that $\theta$ transforms maj to makl, i.e. given $\theta: \pi\mapsto\tau$ then $\maj(\pi)=\makl(\tau)$. It also follows that $\Rmax(\pi)=\Rmax(\tau)$ and $\Rmin(\pi)=\Rmin(\tau)$ (\cite{do}), and consequently $2\u{13}(\pi)=2\u{13}(\tau)$. 

Using the results in \Cref{tab:mahon}, we have that:
\begin{align*}
    \maj(\pi) &= \makl(\tau)\\
    \Leftrightarrow (1\u{32} + 2\u{31} + 3\u{21} + \u{21})(\pi) &= (1\u{32} + 2\u{31} + \u{32}1 + \u{21})(\tau)\\
    \Leftrightarrow (1\u{32} + 3\u{21} + \u{21})(\pi) &= (1\u{32} + \u{32}1 + \u{21})(\tau) & (\pi,\tau\in\Av(231))\\
    \Leftrightarrow (2\underline{13}+1\underline{32}+3\underline{21}+ \underline{21})(\pi)&=(1\underline{32}+\underline{32}1+2\underline{13}+\underline{21})(\tau) & (2\u{13}(\pi)=2\u{13}(\tau))\\
\end{align*}

Applying $c\circ r$ to $\pi$ and $\tau$ as well as all vincular patterns gives us a new equality:

\begin{align*}
    (\underline{13}2 + \underline{21}3 + \underline{32}1 + \underline{21})(c\circ r(\pi))&=(\underline{21}3 + 3\underline{21} + \underline{13}2 + \underline{21})(c\circ r(\tau))\\
    \bast(c\circ r(\pi))&=\foze(c\circ r(\tau))
\end{align*}

It is obvious that $\Theta\left(c\circ r(\pi)\right)=c\circ r(\tau)$. Therefore $\Theta$ transforms $\bast$ to $\foze$ on $\Av(312)$.
\end{proof}

The proof above is a combinatorial proof since it took advantage of the vincular pattern expressions and counted the occurrences of the patterns to draw conclusion about the equidistribution. We will see how the combinatorial approach works with another equidistribution in the section below.

\section{Equidistribution of foze$''$ on Av(312) and inv on Av(321)}\label{sec:foze''inv}

The Simion-Schmidt bijection $\psi: \Av(312)\rightarrow\Av(321)$ transforms a permutation $\pi$ into $\tau$ by keeping the left-to-right maxima in place and rearrange other letters in increasing order. In this section, we will prove the following theorem:

\begin{theorem}\label{thm:simionschmidt}
$\psi$ transforms $\foze''$ on $\Av(312)$ to $\inv$ on $\Av(321)$.
\end{theorem}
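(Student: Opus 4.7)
The plan is to combine the vincular expansions of $\foze''$ and $\inv$ from \Cref{tab:mahon} with pattern-avoidance, reducing $\foze''(\pi)=\inv(\psi(\pi))$ to a clean counting identity on inversions. Since any occurrence of the vincular pattern $\underline{31}2$ is in particular a classical $312$-occurrence, every $\pi\in\Av(312)$ satisfies $\underline{31}2(\pi)=0$, so its $\foze''$-expansion collapses to $\foze''(\pi)=\underline{23}1(\pi)+\underline{21}(\pi)$. The same reasoning applied to $\inv=\underline{23}1+\underline{31}2+\underline{32}1+\underline{21}$ gives $\inv(\pi)=\underline{23}1(\pi)+\underline{32}1(\pi)+\underline{21}(\pi)$, whence
$$\foze''(\pi)=\inv(\pi)-\underline{32}1(\pi).$$
Because $\tau=\psi(\pi)\in\Av(321)$ similarly forces $\underline{32}1(\tau)=0$, the theorem reduces to showing $\inv(\pi)-\inv(\tau)=\underline{32}1(\pi)$.

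Write $L=\Lmax(\pi)$; by construction $\psi$ fixes $L$ pointwise and sorts the entries outside $L$ increasingly, so $L=\Lmax(\tau)$ too. The next step is to identify $\underline{32}1(\pi)$ with the number of inversions of $\pi$ having \emph{both} endpoints outside $L$. Any occurrence $(p,p+1,q)$ of $\underline{32}1$ has $p+1,q\notin L$: were $p+1$ in $L$, the maximum $\pi_{p+1}$ would exceed the earlier entry $\pi_p$, contradicting $\pi_p>\pi_{p+1}$; were $q$ in $L$, $\pi_q$ would exceed $\pi_{p+1}$, contradicting $\pi_q<\pi_{p+1}$. Conversely, given an inversion $(a,b)$ with $a,b\notin L$, position $a-1$ must be a descent: otherwise the witness $\pi_j>\pi_a$ guaranteed by $a\notin L$ would have to lie at some $j<a-1$, and $(j,a-1,a)$ would be a classical $312$-occurrence, contradicting $\pi\in\Av(312)$. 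Hence $(a-1,a,b)$ is a $\underline{32}1$-triple, and the assignment $(a,b)\mapsto(a-1,a,b)$ is a bijection between the two sets.

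It remains to match the other inversions. Pairs at two max-positions never invert in either permutation, and $\tau$ has no inversion with both endpoints outside $L$ because $\psi$ sorts those entries. The only surviving inversions therefore have shape $(i_t,j)$ with $i_t\in L$ holding the $t$-th maximum $M_t$ and $j\notin L$, $j>i_t$; the inversion condition is $\pi_j<M_t$ for $\pi$ and $\tau_j<M_t$ for $\tau$. Both counts equal $M_t-i_t$: among the $M_t-t$ non-maxima strictly less than $M_t$, the $i_t-t$ non-max positions before $i_t$ are each occupied in $\pi$ by a value below a smaller preceding maximum (and hence below $M_t$), and in $\tau$ by one of the $i_t-t$ smallest non-maxima (all below $M_t$ since $i_t\le M_t$). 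Subtracting leaves $M_t-i_t$ such values at non-max positions after $i_t$ in both cases. Summing over $t$ yields $\inv(\pi)-\inv(\tau)=\underline{32}1(\pi)$, as required.

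The principal obstacle is this simultaneous accounting: although $\psi$ genuinely reshuffles the non-max entries, the count of those dropping below any particular left-to-right maximum must come out invariant. The clean way to arrange this is first to extract the structural fact that in a 312-avoider every non-max entry lying before a left-to-right maximum $M_t$ is automatically below $M_t$; once that is in hand, the rest of the argument is a routine pigeonhole.
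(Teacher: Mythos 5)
Your proposal is correct, and it reaches the conclusion by a genuinely different route than the paper. The paper also begins by collapsing $\foze''(\pi)$ to $(\u{23}1+\u{21})(\pi)$ on $\Av(312)$, but it then evaluates each summand explicitly: $\u{21}(\pi)=n-\lmax(\pi)$ and, via the $\plmax$ recursion for $\inv(\pi,i)$, $\u{23}1(\pi)=\sum_{i\in\Lmax(\pi)}(\pi_i-i)-n+\lmax(\pi)$; separately it proves $\inv(\tau)=\sum_{i\in\Lmax(\tau)}(\tau_i-i)$ for $\tau\in\Av(321)$ by showing every inversion there starts at a left-to-right maximum. The two sides then agree because $\psi$ fixes the maxima. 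You instead exploit the vincular expansion of $\inv$ itself, reduce the theorem to the single identity $\inv(\pi)-\inv(\tau)=\u{32}1(\pi)$, and prove it by (i) a bijection $(a,b)\mapsto(a-1,a,b)$ between inversions of $\pi$ with both endpoints at non-maximum positions and $\u{32}1$-occurrences (where $312$-avoidance forces $a-1$ to be a descent), and (ii) the observation that inversions anchored at the $t$-th maximum number $M_t-i_t$ in both $\pi$ and $\tau$. Your route avoids the $\plmax$ machinery and the explicit formula for $\u{23}1(\pi)$ entirely, at the cost of the somewhat delicate bookkeeping in (ii) — which could in fact be shortened by just quoting the paper's elementary Lemma~\ref{ltaFormula}, valid for any permutation, to both $\pi$ and $\tau$. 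The paper's version has the side benefit of producing the closed form $\foze''(\pi)=\sum_{i\in\Lmax(\pi)}(\pi_i-i)$, which it records as a reusable identity; your version isolates the cleaner structural fact that on $\Av(312)$ one has $\foze''=\inv-\u{32}1$. Both arguments are sound.
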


Despite the combinatorial proof below is longer than the induction proof in \cite{do}, it shows us a lot of beautiful formulas along the way (such as \Cref{invInAv321}, \ref{form23_1}, \ref{form21}, \ref{formfoze2}).


Recall that the pair $(x,y)$ is an inversion of $\tau$ if $x<y$ and $\tau_x > \tau_y$; and $\inv(\tau)$ is the number of inversions in $\tau$. Furthermore, let us denote $\inv(\tau, i)$ as the number of inversions $(x,y)$ in $\tau$ such that $x=i$. We have the first two lemmas:

\begin{lemma}\label{ltaFormula}
Given any permutation $\tau$ and $i\in\Lmax(\tau)$, then $\inv(\tau, i)=\tau_i - i$.
\end{lemma}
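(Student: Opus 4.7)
The plan is to prove this by a direct counting argument rather than induction. The key observation is that being a left-to-right maximum imposes a strong constraint on the values appearing to the left of position $i$, which we can then use to count the inversions headed by $i$.

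First I would unpack the definitions. Since $i \in \Lmax(\tau)$, by definition $\tau_i > \tau_j$ for every $j < i$. This means the $i-1$ entries $\tau_1, \tau_2, \dots, \tau_{i-1}$ are $i-1$ pairwise distinct values all drawn from the set $\{1, 2, \dots, \tau_i - 1\}$, which contains exactly $\tau_i - 1$ elements. Consequently, the number of values in $\{1, 2, \dots, \tau_i - 1\}$ that do \emph{not} appear among $\tau_1, \dots, \tau_{i-1}$ is $(\tau_i - 1) - (i - 1) = \tau_i - i$, and all such values must therefore appear at some position $y > i$.

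Next I would connect this count to $\inv(\tau, i)$. By definition, $\inv(\tau, i)$ is the number of indices $y > i$ with $\tau_y < \tau_i$, which is precisely the number of values in $\{1, 2, \dots, \tau_i - 1\}$ that occur at positions strictly after $i$. By the previous paragraph this number equals $\tau_i - i$, which yields the claimed identity.

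There is essentially no obstacle here: the statement reduces to the elementary pigeonhole-style observation that a left-to-right maximum at position $i$ forces the $i-1$ preceding entries to use up exactly $i-1$ of the $\tau_i - 1$ values below $\tau_i$, leaving $\tau_i - i$ of them to sit to the right. The only thing worth being careful about is distinguishing the two sets involved (positions before $i$ versus values smaller than $\tau_i$) and noting the bijection between them induced by $\tau$.
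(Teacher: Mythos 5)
Your argument is correct and is essentially identical to the paper's proof: both count the $\tau_i-1$ values below $\tau_i$, note that the $i-1$ entries preceding position $i$ account for exactly $i-1$ of them (by the left-to-right maximum property), and conclude that the remaining $\tau_i-i$ values lie to the right of position $i$ and each forms an inversion with it. Your write-up just spells out the pigeonhole bookkeeping a bit more explicitly.
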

\begin{proof}
There are $\tau_i-1$ numbers smaller than $\tau_i$ in the permutation, and $i-1$ of them stand before $\tau_i$ because $i\in\Lmax(\tau)$. Therefore, there are $\tau_i-i$ numbers smaller than $\tau_i$ and stand after $\tau_i$.
\end{proof}

\begin{lemma}\label{invInAv321}
$(i,j)$ is an inversion in $\tau\in\Av(321)$ $\Rightarrow$ $i\in\Lmax(\tau)$.
\end{lemma}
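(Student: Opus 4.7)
The plan is to proceed by contradiction. Suppose $(i,j)$ is an inversion of $\tau$, so by definition $i<j$ and $\tau_i>\tau_j$, and suppose for contradiction that $i\notin\Lmax(\tau)$. Then there exists an index $k<i$ with $\tau_k>\tau_i$.

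From here I would simply string the inequalities together: we have $k<i<j$ as positions, and $\tau_k>\tau_i>\tau_j$ as values. This means the triple $(k,i,j)$ reduces to the pattern $321$, so it is a $321$-occurrence in $\tau$. This contradicts the assumption $\tau\in\Av(321)$, and therefore the supposition that $i\notin\Lmax(\tau)$ must fail; hence $i\in\Lmax(\tau)$, completing the proof.

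There is no real obstacle here — the statement is essentially a one-line consequence of the definitions of inversion, left-to-right maximum, and pattern occurrence. The only thing to be careful about is making sure the indices are kept in the correct order so that the reduced form of $\tau_k\tau_i\tau_j$ is indeed $321$ (i.e.\ the position ordering $k<i<j$ witnesses the pattern). No lemma from earlier in the paper is needed beyond the definitions given in \Cref{sec:pp} and \Cref{sec:stats}.
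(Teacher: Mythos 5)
Your proof is correct, but it takes a genuinely different route from the paper's. You argue by direct contradiction: if $i\notin\Lmax(\tau)$, pick a witness $k<i$ with $\tau_k>\tau_i$ (strict, since entries are distinct), and then $k<i<j$ together with $\tau_k>\tau_i>\tau_j$ exhibits $(k,i,j)$ as a $321$-occurrence, contradicting $\tau\in\Av(321)$. The paper instead invokes the structural characterization of $321$-avoiding permutations as a merge of two increasing subsequences (the left-to-right maxima and the remaining entries), then argues that the two positions of an inversion must lie in different subsequences and that $j$ cannot be the left-to-right maximum, leaving $i$. Your argument is more elementary and self-contained --- it needs nothing beyond the definitions of inversion, left-to-right maximum, and pattern occurrence, whereas the paper's proof leans on an external structural fact (cited from the literature). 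What the paper's approach buys is that it makes the two-increasing-sequences picture of $\Av(321)$ explicit, which is conceptually suggestive for the surrounding material; but for the purpose of this lemma alone, your three-line contradiction is the cleaner proof.
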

\begin{proof}
Since $\tau\in\Av(321)$, removing the left-to-right maxima (which form an increasing subsequence) gives us an increasing sequence. In other words, it is \textit{merged} from two increasing sequences (see \citet{av321}). We quickly realize that in order for $(i,j)$ (where $i<j$) to be an inversion, $i$ and $j$ must belong to different subsequences, because otherwise $\tau_i<\tau_j$ would happen. In other words, one of them must be a left-to-right maximum. On the other hand, $j$ cannot be a left-to-right maxiumum because that would cause $\tau_i<\tau_j$. Therefore, $i$ must be a left-to-right maximum.
\end{proof}

Combining the result of \Cref{ltaFormula} and \Cref{invInAv321}, we can just count all inversions of $\tau\in\Av(321)$ by totalling $\inv(\tau,i)$ across all $i\in\Lmax(\tau)$.

\begin{lemma}\label{forminv}
Given $\tau\in\Av(321)$: $$\inv(\tau)=\sum_{i\in\Lmax(\tau)}(\tau_i-i)$$
\end{lemma}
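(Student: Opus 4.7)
The plan is to decompose $\inv(\tau)$ as a sum over the left endpoint of each inversion pair, and then use the two preceding lemmas to restrict the summation and evaluate each term in closed form.

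First, I would simply observe that partitioning all inversions of $\tau$ according to their first coordinate gives
\[
\inv(\tau)=\sum_{i=1}^{n}\inv(\tau,i),
\]
where $n$ is the length of $\tau$. This identity holds for every permutation and requires no avoidance hypothesis; it just says that every inversion pair $(i,j)$ is counted exactly once when we group by $i$.

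Next I would invoke \Cref{invInAv321}: since $\tau\in\Av(321)$, any inversion $(i,j)$ forces $i\in\Lmax(\tau)$. Contrapositively, if $i\notin\Lmax(\tau)$ then $\inv(\tau,i)=0$, so the indices outside $\Lmax(\tau)$ contribute nothing to the sum. This collapses the above expression to
\[
\inv(\tau)=\sum_{i\in\Lmax(\tau)}\inv(\tau,i).
\]
Finally, \Cref{ltaFormula} evaluates each surviving term: for $i\in\Lmax(\tau)$ we have $\inv(\tau,i)=\tau_i-i$, which upon substitution yields exactly the claimed formula.

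There is essentially no obstacle — this is a two-line combination of the previous lemmas, with the only care needed being that the partition of inversions by first coordinate is exhaustive and disjoint (which is immediate). The lemma's real value lies downstream in \Cref{thm:simionschmidt}, where it will be compared against an analogous closed-form expression for $\foze''$ on $\Av(312)$.
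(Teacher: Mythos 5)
Your proof is correct and matches the paper's reasoning exactly: the paper also obtains this lemma by combining \Cref{invInAv321} (inversions in a $321$-avoiding permutation must start at a left-to-right maximum) with \Cref{ltaFormula} (each such position contributes $\tau_i-i$), stating it in a single sentence rather than writing out the partition $\inv(\tau)=\sum_i\inv(\tau,i)$ explicitly. Nothing is missing.
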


\newcommand{\plmax}{\mathrm{plmax}}
\Cref{ltaFormula} gives us the formula of $\inv(\pi, i)$ for $i\in\Lmax(\pi)$ and any $\pi$. When $\pi\in\Av(312)$, we can extend the formula for any $i\in [n]$. Denote $\plmax(\pi, i)$ as the nearest left-to-right maximum that is not greater than $i$ (the letter \textit{p} stands for \textit{previous}).
\begin{lemma}
Given $\pi\in\Av(312)$ and some $i\in [n]$, then $$\inv(\pi, i) = \inv\left(\pi, \plmax(\pi, i)\right)-\left(i-\plmax(\pi, i)\right)$$
\end{lemma}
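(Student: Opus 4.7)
The plan is to split the difference $\inv(\pi, p) - \inv(\pi, i)$, where $p := \plmax(\pi, i)$, into two parts according to whether positions lie in the range $(p, i]$ or strictly past $i$. Recall $\inv(\pi, j)$ counts pairs $(j, k)$ with $k > j$ and $\pi_k < \pi_j$, so the formula I want to establish is equivalent to showing $\inv(\pi, p) = \inv(\pi, i) + (i - p)$.

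First, I would observe that since $p$ is the \emph{nearest} left-to-right maximum with $p \leq i$, no position in $\{p+1, p+2, \ldots, i\}$ is a left-to-right maximum. In particular, $\pi_k < \pi_p$ for every $k \in (p, i]$. Each such $k$ contributes $1$ to $\inv(\pi, p)$ (it satisfies $k > p$ and $\pi_k < \pi_p$) but contributes $0$ to $\inv(\pi, i)$ (since $k \leq i$). This produces exactly $i - p$ contributions to $\inv(\pi,p)$ that are absent from $\inv(\pi,i)$.

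Next, I would argue that the contributions from positions $k > i$ to $\inv(\pi, p)$ and to $\inv(\pi, i)$ coincide. For any $k > i$, I want to show the equivalence $\pi_k < \pi_p \iff \pi_k < \pi_i$. The forward implication from right to left is immediate from $\pi_i < \pi_p$ (which holds whether or not $i = p$, since if $p < i$ then $\pi_i < \pi_p$ by the argument above). For the other direction, I would suppose for contradiction that some $k > i$ satisfies $\pi_i < \pi_k < \pi_p$. Then the triple of positions $(p, i, k)$ carries values with $\pi_p > \pi_k > \pi_i$, which is precisely a $312$-occurrence --- contradicting $\pi \in \Av(312)$.

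Combining the two observations gives $\inv(\pi, p) - \inv(\pi, i) = (i - p) + 0 = i - p$, which rearranges to the desired identity. The genuine content is the second step, where $312$-avoidance is invoked to rule out any value strictly between $\pi_i$ and $\pi_p$ appearing past position $i$; the first step is essentially bookkeeping built on the definition of $\plmax$.
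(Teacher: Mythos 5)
Your proof is correct and takes essentially the same approach as the paper: you split the positions after $p=\plmax(\pi,i)$ into the range $(p,i]$ (contributing exactly $i-p$ inversions at $p$ but none at $i$) and the positions beyond $i$, where $312$-avoidance via the triple $(p,i,k)$ forces the inversion counts to agree. The paper's proof is just a terser statement of this same decomposition (and in fact contains a typographical reversal of the inequality $\pi_i<\pi_{\plmax(\pi,i)}$ that you state correctly).
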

\begin{proof}
We can see that $\pi_i > \pi_{\plmax(\pi, i)}$ and $\pi\in\Av(312)$, thus for any $j>i$ we have $\pi_i < \pi_j$ $\Leftrightarrow$ $\pi_{\plmax(\pi, i)} < \pi_j$. Therefore, we only need to exclude $(i-\plmax(\pi, i))$ entries standing between position $\plmax(\pi, i)$ and $i$ (and $i$ itself, too).
\end{proof}

We know from \Cref{tab:mahon} that $\foze''(\pi)=(\underline{23}1 + \underline{31}2 + \underline{31}2 + \underline{21})(\pi)$. However, since $\pi\in\Av(312)$, $\foze''(\pi)=(\u{23}1+\u{21})(\pi)$. The two lemmas below will give the formula for each vincular pattern:

\begin{lemma}\label{form23_1}
Given $\pi\in\Av(312)$, then
$$\u{23}1(\pi)=\sum_{i\in\Lmax(\pi)}(\pi_i-i)-n+\lmax(\pi)$$
\end{lemma}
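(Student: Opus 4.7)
The plan is to re-express $\u{23}1(\pi)$ as a sum of $\inv(\pi,\cdot)$ over ascent positions, and then use the characterisation of ascents in $\Av(312)$ together with the preceding $\plmax$-formula to telescope everything into a sum over $\Lmax(\pi)$.

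First, an occurrence of $\u{23}1$ in $\pi$ is a triple $(i,i+1,j)$ with $j>i+1$ and $\pi_j<\pi_i<\pi_{i+1}$. The condition $\pi_i<\pi_{i+1}$ says $i\in\Asc(\pi)$, and then the number of valid $j$ is the count of positions strictly to the right of $i$ whose value is less than $\pi_i$ (position $i+1$ contributes nothing, since $\pi_{i+1}>\pi_i$). This count is exactly $\inv(\pi,i)$, so
$$\u{23}1(\pi)=\sum_{i\in\Asc(\pi)}\inv(\pi,i).$$

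Second, I claim that for $\pi\in\Av(312)$ the ascents are exactly the positions $j-1$ with $j\in\Lmax(\pi)$, $j>1$. The $\Leftarrow$ direction is immediate from the definition of left-to-right maximum. Conversely, if $i$ is an ascent with $i+1\notin\Lmax(\pi)$, then some $k<i$ would satisfy $\pi_k>\pi_{i+1}>\pi_i$, giving the $312$-occurrence $(k,i,i+1)$; contradiction. Enumerating $\Lmax(\pi)=\{j_1<j_2<\dots<j_\ell\}$ with $\ell:=\lmax(\pi)$, and noting $j_1=1$ and $\pi_{j_\ell}=n$, we obtain $\Asc(\pi)=\{j_s-1:2\le s\le\ell\}$.

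Third, I apply the preceding $\plmax$-lemma at $i=j_s-1$ for each $s\ge 2$. Since $\plmax(\pi,j_s-1)=j_{s-1}$ (whether or not $j_{s-1}=j_s-1$), and \Cref{ltaFormula} gives $\inv(\pi,j_{s-1})=\pi_{j_{s-1}}-j_{s-1}$, a one-line calculation yields $\inv(\pi,j_s-1)=\pi_{j_{s-1}}-j_s+1$. Summing over $s=2,\dots,\ell$,
$$\u{23}1(\pi)=\sum_{s=2}^{\ell}\bigl(\pi_{j_{s-1}}-j_s+1\bigr)=\sum_{s=1}^{\ell-1}\pi_{j_s}-\sum_{s=2}^{\ell}j_s+(\ell-1).$$
Adding and subtracting $\pi_{j_\ell}-j_1$ and using $j_1=1$, $\pi_{j_\ell}=n$ rewrites this as $\sum_{s=1}^{\ell}(\pi_{j_s}-j_s)-n+\ell$, exactly the stated formula.

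The only nonroutine step is the ascent--Lmax characterisation, whose proof is essentially one line of $312$-avoidance; the rest is linear bookkeeping. A minor point worth flagging is that when two consecutive left-to-right maxima are adjacent ($j_{s-1}=j_s-1$) the $\plmax$-identity degenerates, but the resulting closed form $\pi_{j_{s-1}}-j_s+1$ is unchanged, so no case split is needed.
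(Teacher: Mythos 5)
Your proof is correct and follows essentially the same route as the paper's: both identify the middle entry of a $\u{23}1$-occurrence with a left-to-right maximum (via the same $312$-avoidance contradiction), count the occurrences anchored at each such position as $\inv(\pi,i-1)$, evaluate that via the $\plmax$ formula as $\pi_{\plmax(\pi,i-1)}-i+1$, and telescope the sum using $j_1=1$ and $\pi_{j_\ell}=n$. Your intermediate identity $\u{23}1(\pi)=\sum_{i\in\Asc(\pi)}\inv(\pi,i)$ and the explicit remark that the $\plmax$ step needs no case split when consecutive left-to-right maxima are adjacent are slightly cleaner packaging of the same argument.
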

\begin{proof}
If $(x-1,x,y)$ is a $\u{23}1$-occurrence, then $x\in\Lmax(\pi)$; because if $x\notin\Lmax(\pi)$, then $(\plmax(\pi, x), x-1, x)$ would be a 312-occurrence.

For some $i\in\Lmax(\pi)\textbackslash\{1\}$, the number of $\u{23}1$-occurrences $(i-1, i, j)$ is $\inv(\pi, i-1)$, which, by \Cref{form23_1}, is:
\begin{align*}
    \inv(\pi, i-1)&=\inv(\pi,\plmax(\pi,i-1))-(i-1-\plmax(\pi, i-1)\\
    &=\pi_{\plmax(\pi,i-1)}-\plmax(\pi,i-1)-(i-1-\plmax(\pi,i-1))\\
    &=\pi_{\plmax(\pi,i-1)}-i+1\\
\end{align*}

Totalling that value across all $i\in\Lmax(\pi)\textbackslash\{1\}$, we have:

\begin{align*}
    \u{23}1(\pi)&=\sum_{i\in\Lmax(\pi)\textbackslash\{1\}}(\pi_{\plmax(\pi,i-1)}-i+1)\\
    &=\sum_{i\in\Lmax(\pi)\textbackslash\{1\}}\pi_{\plmax(\pi,i-1)} - \sum_{i\in\Lmax(\pi)\textbackslash\{1\}}i + \lmax(\pi)-1\\
    &=\sum_{i\in\Lmax(\pi)\textbackslash\{1\}}\pi_{\plmax(\pi,i-1)} - \sum_{i\in\Lmax(\pi)}i + \lmax(\pi)\\
    &=\sum_{i\in\Lmax(\pi)\textbackslash\{n\}}\pi_i - \sum_{i\in\Lmax(\pi)}i + \lmax(\pi)\\
    &=\sum_{i\in\Lmax(\pi)}\pi_i -n - \sum_{i\in\Lmax(\pi)}i + \lmax(\pi)
\end{align*}
\end{proof}

\begin{lemma}\label{form21}
Given $\pi\in\Av(312)$ and $1\leq y < n$, then
$$y\in\Des(\pi)\Leftrightarrow y+1\notin\Lmax(\pi)$$
which implies:
$$\u{21}(\pi)=\des(\pi)=n-\lmax(\pi)$$
\end{lemma}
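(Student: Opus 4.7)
The plan is to establish the biconditional first, then read off the two equalities as easy corollaries.

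For the forward direction ($\Rightarrow$), assume $y\in\Des(\pi)$, so $\pi_y>\pi_{y+1}$. Then position $y<y+1$ witnesses a value larger than $\pi_{y+1}$ standing to its left, so $y+1\notin\Lmax(\pi)$. This direction holds for any permutation and does not use 312-avoidance.

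For the backward direction ($\Leftarrow$), assume $y+1\notin\Lmax(\pi)$. Then there exists some $x<y+1$ with $\pi_x>\pi_{y+1}$. If $x=y$ we are done. Otherwise $x<y$, and I would argue by contradiction: suppose $y\notin\Des(\pi)$, i.e.\ $\pi_y<\pi_{y+1}$. Then the three values $\pi_x>\pi_{y+1}>\pi_y$ sit at positions $x<y<y+1$ and form a $312$-occurrence, contradicting $\pi\in\Av(312)$. Hence $\pi_y>\pi_{y+1}$, i.e.\ $y\in\Des(\pi)$. This is the step that crucially uses the pattern-avoidance hypothesis, though it is straightforward.

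To deduce the counting identity, I would use that $\u{21}(\pi)=\des(\pi)$ trivially for any permutation, since every $\u{21}$-occurrence is, by definition of the adjacency constraint, exactly a descent at some position $y$. Then I would partition the positions $\{2,3,\dots,n\}$ into left-to-right maxima and non–left-to-right maxima. Since position $1$ is always a left-to-right maximum, there are $\lmax(\pi)-1$ left-to-right maxima in $\{2,\dots,n\}$, hence $n-\lmax(\pi)$ positions $y+1\in\{2,\dots,n\}$ that are not left-to-right maxima. By the biconditional just established, these are in bijection with descents $y\in\{1,\dots,n-1\}$, giving
\[
\u{21}(\pi)=\des(\pi)=n-\lmax(\pi).
\]

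There is no real obstacle here; the only subtle point is in the backward direction, where one must notice that $\pi_y<\pi_{y+1}$ together with any earlier larger entry immediately produces the forbidden $312$ pattern. Once that is observed, the counting is a clean bookkeeping step using the partition of $\{2,\dots,n\}$.
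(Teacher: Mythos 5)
Your proposal is correct and follows essentially the same route as the paper: the forward direction is immediate, the backward direction derives a $312$-occurrence from an ascent at $y$ together with an earlier entry exceeding $\pi_{y+1}$ (the paper just names the witness explicitly as $\plmax(\pi,y)$), and the count $n-\lmax(\pi)$ comes from the same partition of positions $\{2,\dots,n\}$. No issues.
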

\begin{proof}
It is natural that $y\in\Des(\pi)\Rightarrow y+1\notin\Lmax(\pi)$. On the other hand, when $y+1\notin\Lmax(\pi)$, suppose that $y$ is an ascent, then $(\plmax(\pi,y), y, y+1)$ is a 312-occurrence (contradiction). Therefore $y$ must be a descent.

Because $1\leq y<n$, there are $n-1$ possible values, among which $\lmax(\pi)-1$ values satisfying the condition $y+1\in\Lmax(\pi)$. Therefore the number of descents is $n-1-(\lmax(\pi)-1)=n-\lmax(\pi)$.
\end{proof}

From \Cref{form23_1} and \Cref{form21}, we can conclude that:

\begin{lemma}\label{formfoze2}
Given $\pi\in\Av(312)$:
$$\foze''(\pi)=\sum_{i\in\Lmax(\pi)}(\pi_i-i)$$
\end{lemma}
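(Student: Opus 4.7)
The plan is to observe that Lemma \ref{formfoze2} is essentially immediate from the two preceding lemmas, together with the reduction of $\foze''$ on $\Av(312)$ already noted in the text. First I would restate the simplification: since $\pi\in\Av(312)$ contains no $312$-occurrences, the vincular patterns $\u{31}2$ appearing in the expression $\foze''=\u{23}1+\u{31}2+\u{31}2+\u{21}$ contribute zero (the pattern $\u{31}2$ is a specialization of the classical pattern $312$, and the total stated in \Cref{tab:mahon} includes it twice). Hence on $\Av(312)$ we have $\foze''(\pi)=\u{23}1(\pi)+\u{21}(\pi)$, as already remarked in the paragraph preceding \Cref{form23_1}.

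Second, I would substitute the formulas from \Cref{form23_1} and \Cref{form21} directly. Writing
\[
\foze''(\pi)=\u{23}1(\pi)+\u{21}(\pi)=\Bigl(\sum_{i\in\Lmax(\pi)}(\pi_i-i)-n+\lmax(\pi)\Bigr)+\bigl(n-\lmax(\pi)\bigr),
\]
the terms $-n+\lmax(\pi)$ and $n-\lmax(\pi)$ cancel, leaving the desired identity $\foze''(\pi)=\sum_{i\in\Lmax(\pi)}(\pi_i-i)$.

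There is no real obstacle here; the lemma is a bookkeeping corollary of \Cref{form23_1} and \Cref{form21}. The only thing worth double-checking is the reduction step at the start, namely that the two copies of $\u{31}2$ and also the instance of $\underline{23}1$ handled by \Cref{form23_1} really do capture the only contributing patterns on $\Av(312)$; once that is confirmed, the proof reduces to the one-line cancellation above. I would therefore keep the write-up short, essentially two sentences plus the displayed equation, to avoid padding what is a direct combination of earlier results.
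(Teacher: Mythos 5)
Your proposal is correct and matches the paper's own derivation: the text reduces $\foze''$ to $(\u{23}1+\u{21})(\pi)$ on $\Av(312)$ in the paragraph preceding \Cref{form23_1}, and then obtains \Cref{formfoze2} exactly by adding the formulas of \Cref{form23_1} and \Cref{form21} so that the $\pm(n-\lmax(\pi))$ terms cancel. Nothing further is needed.
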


Recall that the Simion-Schmidt bijection $\psi: \pi\mapsto\tau$ preserves the set of left-to-right maxima, i.e. $\Lmax(\pi)=\Lmax(\tau)$. Knowing that $\pi\in\Av(312)$ and $\tau\in\Av(321)$, and the results of \Cref{formfoze2} and \Cref{forminv}, we can confirm that $\foze''(\pi)=\inv(\tau)$, thus \Cref{thm:simionschmidt} is true.

In the next chapter, we will conclude and summarize the contributions we made, together with some notes on the methods we took while working on the thesis.

\chapter{Conclusion}\label{chapter:conclusion}
\newcommand{\st}{\mathrm{st}}
Before concluding the thesis, we would like to put some emphasis on how we came up with the bijection for an equidistribution problem. Suppose we are given two statistics st$_1$ and st$_2$, and two permutation sets $S_n(\sigma_1)$ and $S_n(\sigma_2)$, which are conjectured to be equidistributed. For small $n$, we can group/partition $S_n(\sigma_1)$ (resp. $S_n(\sigma_2)$) into subsets and label them by the value that $\st_1$ (resp. $\st_2$) induce on them. Then, we would want to find a third statistic $\st_3$ that distributes evenly on any two subsets $A_1\subseteq S_n(\sigma_1)$ and $A_2\subseteq S_n(\sigma_1)$ that have the same partition-label. After that, we would look for a bijection that not only transforms $\st_1$ into $\st_2$, but also preserves $\st_3$. The more statistics $\st_3$ we found, the higher our chance to figure out what the bijection might be. We wrote a computer program to try out different candidate for $\st_3$, among a pool of about 20 well-known statistics (see more Appendix A of \citet{kitaev}).

For example, during working on the pair bast and foze on Av(312), we found that the unknown bijection could possibly preserve the statistics head, head\_i, last, lmax, and lmin. We then notice a bijection in \citet{do} that preserves lmax and lmin too, so we programmed the bijection and later found out that it also solves our equidistribution. Once we were confident with the bijection (by testing it with large permutations), we would start working on the proof. In another attempt to prove the equidistribution between makl and bast on Av(132), we also applied this method and found that the unknown bijection solution $\gamma$ could preserve one or more of these statistics: asc, des, head, lds, lir\_i, lmax, lmin, peak\_i, valley\_i, zeil (see \citet{kitaev} for the definitions). However, we have not found the bijection and the problem remained unsolved.

To sum up, we have proven one new equidistribution and refined the solutions of two established equidistributions. We have shown the alternative construction of the bijection $\theta'$ proposed by \citet{do}, which transforms the statistic Asc to Atop in Av'(231) (\Cref{sec:majmakl}). From there, we took some further step to prove the conjectured equidistribution of bast and foze on on Av(312), with the help of \Cref{lem:form2(13)} (\Cref{sec:bastfoze}). We also gave a combinatorial proof for the equidistribution problem between $\foze''$ on Av(312) and inv on Av(321) (\Cref{sec:foze''inv}) through various formulas. Below is a summary of proven equalities, which we believe will help other researchers solving more equidistribution problems.

\begin{center}
    \begin{tabular}{c|rl|c}
        \hline
        \textbf{References} & \multicolumn{2}{c|}{\textbf{Equality}} & \textbf{Precondition} \\\hline
        \Cref{lem:form2(13)} & $2\u{13}(\pi)$ & $=n-\rmax(\pi)-\rmin(\pi)+1$ & $\pi\in\Av(231)$\\
        \Cref{invInAv321} & $\inv(\tau)$ & $=\sum_{i\in\Lmax(\tau)}(\tau_i-i)$ & $\tau\in\Av(321)$\\
        \Cref{form23_1} & $\u{23}1(\pi)$ & $=\sum_{i\in\Lmax(\pi)}(\pi_i-i)-n+\lmax(\pi)$ & $\pi\in\Av(312)$\\
        \Cref{form21} & $\u{21}(\pi)$ & $=\des(\pi)=n-\lmax(\pi)$ & $\pi\in\Av(312)$\\
        \Cref{formfoze2} & $\foze''(\pi)$ & $=\sum_{i\in\Lmax(\pi)}(\pi_i-i)$ & $\pi\in\Av(312)$\\
        \hline
    \end{tabular}
\end{center}

\appendix{}

\chapter{Popular statistics}
\begin{table}[h]
    \centering
    \begin{tabular}{c|l|l}
        \hline
        \textbf{Notation} & \textbf{Description} & \textbf{Definition}\\\hline
        Asc & the set of ascents & $\{i<n \vert \pi_i<\pi_{i+1}\}$\\
        asc & number of ascents & $|\Asc(\pi)|$\\
        Abot & the set of ascent bottoms & $\{\pi_i \vert i<n \text{ and } \pi_i<\pi_{i+1}\}$\\
        Atop & the set of ascent tops & $\{\pi_{i+1} \vert i<n \text{ and } \pi_i<\pi_{i+1}\}$\\\hline
        Des & the set of descents & $\{i<n \vert \pi_i>\pi_{i+1}\}$\\
        des & number of descents & $|\Des(\pi)|$\\
        Dbot & the set of descent bottoms & $\{\pi_{i+1} \vert i<n \text{ and } \pi_i>\pi_{i+1}\}$\\
        Dtop & the set of descent tops & $\{\pi_i \vert i<n \text{ and } \pi_i>\pi_{i+1}\}$\\\hline
        Lmax & the set of left-to-right maxima & $\{\pi_i\vert \pi_i>\pi_j \text{ for any } j<i\}$\\
        Lmaxl & the entries at the left-to-right maxima & $\pi(\Lmax(\pi))$\\
        lmax & number of left-to-right maxima & $|\Lmax(\pi)|$\\\hline
        Lmin & the set of left-to-right minima & $\{\pi_i\vert \pi_i<\pi_j \text{ for any } j<i\}$\\
        Lminl & the entries at the left-to-right minima & $\pi(\Lmin(\pi))$\\
        lmin & number of left-to-right minima & $|\Lmin(\pi)|$\\\hline
        Rmax & the set of right-to-left maxima & $\{\pi_i\vert \pi_i>\pi_j \text{ for any } j>i\}$\\
        Rmaxl & the entries at the right-to-left maxima & $\pi(\Rmax(\pi))$\\
        rmax & number of right-to-left maxima & $|\Rmax(\pi)|$\\\hline
        Rmin & the set of right-to-left minima & $\{\pi_i\vert \pi_i<\pi_j \text{ for any } j>i\}$\\
        Rminl & the entries at the right-to-left minima & $\pi(\Rmin(\pi))$\\
        rmin & number of right-to-left minima & $|\Rmin(\pi)|$\\\hline
    \end{tabular}
    \caption{Some popular combinatorial and set-induced statistics on permutations}
    \label{tab:statistics}
\end{table}

\microtypesetup{protrusion=false}
\microtypesetup{protrusion=true}
\printglossaries
\bibliographystyle{plainnat}
\bibliography{thesis}

\end{document}